\newtheorem{thm}{Theorem}[section]
\newtheorem{claim}[thm]{Claim}
\def\R{{\rm I\!R}}
\definecolor{szurke}{gray}{0.9}
\title{Dense point sets with many halving lines}
\author{%
Istv\'an Kov\'acs\thanks{Budapest University of Technology and
  Economics. Supported by the National Research, Development and
Innovation Office, NKFIH, K-111827 and by the New National Excellence Program of the
Hungarian Ministry of Human Resources, UNKP-16-3-I.}
\and
G\'eza  T\'oth\thanks{Alfr\'ed R\'enyi Institute of Mathematics and Budapest University of Technology and
  Economics.
Supported by the National Research, Development and
Innovation Office, NKFIH, K-111827.  This work is connected to the scientific
program of the 
"Development of
quality-oriented and harmonized R+D+I strategy and functional model at
BME" project, supported by the New Hungary Development Plan (Project ID:
TÁMOP-4.2.1/B-09/1/KMR-2010-0002).  }}
\begin{document}
\maketitle

\centerline{\em Dedicated to the memory of Ricky Pollack.}

\begin{abstract}

A planar point set of $n$ points is called {\em $\gamma$-dense} 
if the ratio of the largest and smallest distances among the points 
is at most $\gamma\sqrt{n}$. 
We construct a dense set of $n$ points in the plane with 
$ne^{\Omega\left({\sqrt{\log n}}\right)}$ halving lines.
This improves the bound $\Omega(n\log n)$ of Edelsbrunner, Valtr and Welzl from
1997. 

Our construction can be generalized to higher dimensions, for any $d$ we 
construct a dense point set of $n$ points in $\R^d$ with 
$n^{d-1}e^{\Omega\left({\sqrt{\log n}}\right)}$ halving hyperplanes.
Our lower bounds are asymptotically the same as the best known lower bounds for
general point sets. 

\end{abstract}

\section{Introduction}

Let $P$ be a set of $n$ points in the plane in {\em general position}, 
that is, no three of them are on a line.
A line, determined by two points of $P$, is a {\em halving line} if it
has
exactly $(n-2)/2$ points of $P$ on both sides. 
Let $f(n)$ denote the maximum number of halving lines that a set of $n$ points
can have. It is a 
challenging unsolved problem 
to determine $f(n)$. 

The first bounds are due to 
Lov\'asz \cite{L71}, and Erd\H os, Lov\'asz, Simmons and Straus  
\cite{ELSS73}.  
They established the upper bound $O(n^{3/2})$, and the lower bound 
$\Omega\left(n\log n\right)$ (see also \cite{EW85} for a different lower bound
construction). 
Despite great interest in this problem,    
there was no progress until the very small improvement due to
Pach, Steiger and Szemer\'edi \cite{PSS92}. They improved the upper bound to
$O(n^{3/2}/\log^{*}n)$. 
The iterated logarithm of $n$, $\log^{*}n$, is the number of times the $\log$
function must be iteratively applied before the result is at most $1$.

The best known upper bound is  $O(n^{4/3})$,  
due to Dey \cite{D98}.
The lower bound has been improved by T\'oth \cite{T01} to
$ne^{\Omega\left({\sqrt{\log n}}\right)}$. 
Nivasch \cite{N08} simplified the construction of  T\'oth.

\smallskip

Suppose that $\gamma >0$. 
A planar point set $P$ of $n$ points is called {\em $\gamma$-dense} 
if the ratio of the largest and smallest distances determined by $P$ 
is at most $\gamma\sqrt{n}$. 
There exist arbitrarily large  $\gamma$-dense point sets if and only if 
$$\gamma\ge\sqrt{\frac{2\sqrt{3}}{\pi}},$$ see \cite{F43, EVW97}. 
Dense point sets are important in the analysis of
some geometric algorithms,
as they can be considered ``typical'' for some practical applications, like
computer graphics.

Recall that $f(n)$ is the maximum number of halving lines of a (not
necessarily dense) set of $n$
points in the plane. 
Edelsbrunner, Valtr and Welzl 
\cite{EVW97} 
showed, that a $\gamma$-dense point set can have at most
$O(\gamma\sqrt{n}f(3\gamma\sqrt{n}))$ halving lines. 
Agarwal and Sharir \cite{AS00} 
combined it with the result of Dey \cite{D98} and obtained that for any fixed
$\gamma$, a $\gamma$-dense point set can have at most
$O(n^{7/6})$ halving lines.

On the other hand, Edelsbrunner, Valtr and Welzl 
\cite{EVW97} 
constructed $2$-dense point sets 
with
$\Omega(n\log n)$ halving lines. 
Note that at that time $\Omega(n\log n)$ was the best known lower bound for
general point sets as well. 
In this note we give a better construction.

\medskip

\noindent {\bf Theorem 1.} {\em  For any even $n$, there exists 
a $4$-dense set of $n$ points in the plane with 
$ne^{\Omega\left({\sqrt{\log n}}\right)}$ halving lines.}


\medskip

Our lower bound is again asymptotically the same as the best known lower bound
for general point sets and it answers a question of Nivasch \cite{N08}.

\bigskip

Our construction can be generalized to higher dimensions. Let $d\ge 2$ and let 
$P$ be a set of $n$ points in the $d$-dimensional space, $\R^d$, 
in {\em general position}, that is, no $d+1$ of them are on a hyperplane.
A hyperplane, determined by $d$ points of $P$, is a {\em halving hyperplane} if it
has
exactly $(n-d)/2$ points of $P$ on both sides. 
Let $f_d(n)$ denote the maximum number of halving hyperplanes that a set of
$n$ points in $\R^d$ 
can have. So $f_2(n)$ is the same as $f(n)$ above. 
The best upper bounds are 
$f_3(n)\le O(n^{5/2})$ \cite{SST01}, $f_4(n)\le O(n^{4-1/18})$ \cite{S11}, and for $d>4$,
$f_d(n)\le O(n^{d-\varepsilon_d})$ where $\varepsilon_d=1/(2d)^d$ \cite{BMZ15}.
The planar lower bound construction can be ``lifted'' to higher dimensions and it gives the lower bound
$n^{d-1}e^{\Omega\left({\sqrt{\log n}}\right)}$ for $f_d(n)$ \cite{T01}. 

A set of $n$ points in $\R^d$ is called {\em $\gamma$-dense} 
if the ratio of the largest and smallest distances determined by $P$ 
is at most $\gamma\sqrt[d]{n}$. 
It was shown by Edelsbrunner, Valtr and Welzl \cite{EVW97} that for any $d\ge 3$,
a dense point set in $\R^d$
can have at most $O(n^{d-2/d})$ halving hyperplanes.
On the other hand, to the best of our knowledge, so far there is no nontrivial lower bound construction for dense point sets
in $d\ge 3$ dimensions.

\medskip

\noindent {\bf Theorem 2.} {\em  Let $d\ge 3$. There exists a $\gamma>0$ with the following property. 
For every $n$, such that $n+d$ is even,
there exists 
a $\gamma$-dense set of $n$ points in the $d$-dimensional space with 
$n^{d-1}e^{\Omega\left({\sqrt{\log n}}\right)}$ halving hyperplanes.}

\medskip

Just like in the planar case, 
our lower bounds are asymptotically the same as the best known lower bounds for
general point sets. 
The planar construction is divided into three steps, presented in the
next three sections. 
The first step is the bulk of the result, and it is based on the ideas of Nivasch
\cite{N08}. However, we have to be more careful to keep the distances under
control. 
In the last section we show how to generalize it to higher dimensions.


\section{First construction}

\medskip

\noindent {\bf Definition 1.} (a) For any point set $S$ in the plane, if points $p, q\in S$
determine a halving line, then the segment $pq$ is called a {\em halving segment}.
(b) A {\em geometric graph} is a graph drawn in the plane with straight line segments as edges.

\medskip 

First, we construct point sets $S$
with $n$ points and $ne^{\Omega\left({\sqrt{\log n}}\right)}$ halving lines,
such that the 
diameter of $S$ is $1$ and 
the smallest distance is 
$\Omega(n^{-8})$. 
Then, using several, slightly modified copies of $S$, we increase the smallest
distance to $\Omega(n^{-1/2})$ while the number of halving lines remains
asymptotically the same.

\medskip

\noindent {\bf Definition 2.} We say that a set of points forms an {\em
arithmetic progression}, if its points  are on a
horizontal line and consecutive points are at the same distance. 
Its {\em size} is the number of points it contains, its 
{\em step} is the distance between the consecutive points, its {\em width} is
the distance between the first and last points. 

\bigskip

\noindent {\bf Lemma 1.} {\em For every 
$i>0$, 
there exists a point set $S_i$ 
with the following properties.
 
\noindent (a) The number of points $|S_i|=n_{i}=\Omega(2^{i^2/2+i/2})$,

\noindent (b) the number of halving lines of $S_i$, 
$m_i=n_ie^{\Omega\left({\sqrt{\log n_i}}\right)}$,

\noindent (c) for any two points of $S_i$, the difference of their 
$x$-coordinates is at most $1$ and at least 
$\Omega(n_i^{-8})$. }

\bigskip

\noindent {\bf Proof of Lemma 1.}

For every fixed {\em order} 
$o\ge 0$, we construct  
the geometric graphs $G^o_{i}(S^o_{i}, H^o_{i})$
recursively in {\em index} $i$, for $o\ge i\ge 0$.
Each edge in $H^o_i$ will be a {\em halving edge} of $S^o_i$, that is, it will 
correspond to a halving line of $S^o_i$.
The vertex set $S^o_{i}$ will contain two types of
points, {\em plain} and {\em bold}. 
Together with $S^o_{i}$, we construct $H^o_{i}$, a
set of halving segments of $S^o_{i}$. 
Each halving segment in $H^o_{i}$ will be determined
by a plain and a bold point of $S^o_{i}$. 
Note that $H^o_{i}$ might not contain all
halving segments of $S^o_{i}$. 

It will be clear from the construction that for every $o, o'\ge i$, 
$G^o_{i}(S^o_{i}, H^o_{i})$ and 
$G^{o'}_{i}(S^{o'}_{i}, H^{o'}_{i})$
represent the same {\em abstract} graph.

\medskip

Now we sketch the construction, then we explain it precisely. 

\smallskip

Let $S^o_0$ be a set of just two points, $(1,1)$, which is plain, and 
$(0,0)$, which is bold, 
$H^o_0$ is the segment connecting them.
Suppose that we already have $S^o_{i-1}$ and $H^o_{i-1}$. 
Substitute each {\em plain} point of $S^o_{i-1}$ with an arithmetic progression 
of $a_{i}$ 
{\em plain} points, of a very small step $\varepsilon^o_{i}$, and 
substitute  each {\em bold} point with an arithmetic progression of $a_{i}+1$ 
{\em plain} points, of the same step, $\varepsilon^o_{i}$. 
Finally, put a {\em bold } point very close to the midpoint of each halving 
segment of $S^o_{i-1}$. Add all $2a_i+1$ edges from the bold point to the
plain points replacing the endpoints of the halving 
segment of $S^o_{i-1}$.
Finally,  the diagonal construction (i.e., $o=i$) satisfies the conditions of Lemma~1.

\bigskip

Now we describe  the construction precisely. 
Let $S^o_{0}$ be a set of two points, $(1,1)$, which is plain, and $(0,0)$, 
which
is bold. The set $H^o_{0}$ contains the segment determined by the two points.

For $i\ge 1$, let
$$a_{i}=2^i,\ \ 
\varepsilon^o_{i}=2^{-(4o+4)i},\ \ 
d^o_{i}=a_i\varepsilon^o_{i}=2^{-(4o+3)i}.$$

Suppose again that we already have $S^o_{i-1}$ and $H^o_{i-1}$. 
For every {\em plain} point $p=(x,y)\in S^o_{i-1}$, 
let $p^{(k)}=(x+k\varepsilon^o_{i}, y)$, and replace $p$ with  
the {\em plain} points $p^{(k)}\in S^o_{i}$, $0\le k\le a_i-1$.
We call these points the {\em children} 
of $p$, and $p$ is the {\em parent} of
the new points.

For every {\em bold} point $b=(x,y)\in S^o_{i-1}$, 
let $b^{(k)}=(x-k\varepsilon^o_{i}, y)$, and replace $b$ with 
the {\em plain} points $b^{(k)}\in S^o_{i}$, $0\le k\le a_i$.
Again, we call these points the {\em children} of $b$, and $b$ is the {\em parent} of
the new points.

For each halving segment 
$s=pb\in H^o_{i-1}$, where $p=(x_p, y_p)$ is plain and $b=(x_b, y_b)$ is bold, 
add a bold point 
$$q=\left(\frac{x_p+x_b}{2}-\frac{\varepsilon^o_{i}}{4}, \frac{y_p+y_b}{2}\right).$$
Add the segments $qp^{(k)}$, $0\le k\le a_i-1$, and 
$qb^{(k)}$, $0\le k\le a_i$, to $H^o_{i}$. See Figure~\ref{fig:recursion}.
We say that these new segments in $H^o_{i}$ 
are 
the {\em children} 
of $pb\in H^o_{i-1}$, 
and $pb$ is the {\em parent} of the new segments and $q$. 
		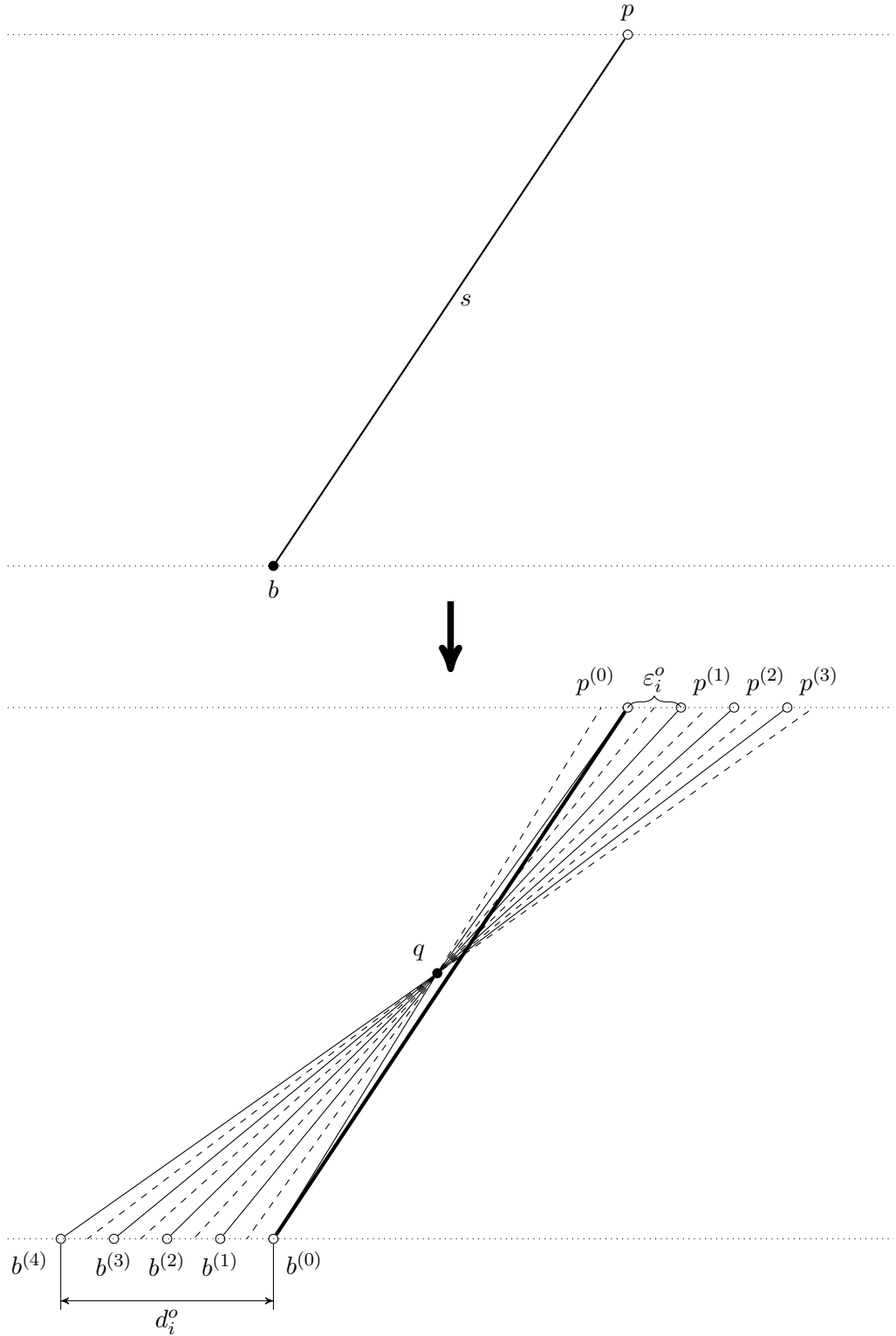
\begin{figure}
	\newcommand{\epsilonnull}{0.3}
		\begin{center}
		\begin{tikzpicture}[>=stealth', scale=2.7]

		 \tikzstyle{b}=[draw,circle,fill=black,minimum size=4,inner sep=0]
		 \tikzstyle{t}=[draw,circle,fill=white,minimum size=4,inner sep=0]
 
		 \node[b] (b) at (0,0) [label=below:$b$] {};
		 \node[t] (p) at (2,3) [label=above:$p$] {};

		 \draw[ thick] (b) -- (p) node[pos=0.5, right] {$s$};
		 \draw[ dotted] (-1.5,3) -- (3.5,3);
		 \draw[ dotted] (-1.5,0) -- (3.5,0);

	 \draw[->, line width=1mm] (1,-0.2) -- (1,-0.6);
	 	\begin{scope}[shift={(-6,-3.8)}]

	 \node[b] (bx) at (7-\epsilonnull/4,1.5) [label=above left:$q$] {};
	 \node[t] (p3) at (8 + 3*\epsilonnull,3)  [label=above right:$p^{(3)}$]  {};
	 \node[t] (p2) at (8 + 2*\epsilonnull,3) [label=above right:$p^{(2)}$]  {};
	 \node[t] (p1) at (8 + \epsilonnull,3) [label=above right:$p^{(1)}$] {};
	 \node[t] (p0) at (8,3) [label=above left:$p^{(0)}$] {};
	 \node[t] (b0) at (6,0)  [label=below right:$b^{(0)}$]  {};
	 \node[t] (b1) at (6-\epsilonnull,0)  [label=below:$b^{(1)}$]  {};
	 \node[t] (b2) at (6-2*\epsilonnull,0)  [label=below:$b^{(2)}$]  {};
	 \node[t] (b3) at (6-3*\epsilonnull,0) [label=below:$b^{(3)}$] {};
	 \node[t] (b4) at (6-4*\epsilonnull,0)  [label=below left:$b^{(4)}$]  {};
 
	 \draw[dashed] (bx) -- (8-\epsilonnull / 2, 3) node[pos=0.5, right] {};
	 \draw[ ] (bx) -- (p0) node[pos=0.5, right] {};
	 \draw[dashed] (bx) -- (8+\epsilonnull / 2, 3) node[pos=0.5, right] {};
	 \draw[ ] (bx) -- (p1) node[pos=0.5, right] {};
	 \draw[dashed] (bx) -- (8+3*\epsilonnull / 2, 3) node[pos=0.5, right] {};
	 \draw[ ] (bx) -- (p2) node[pos=0.5, right] {};
	 \draw[dashed] (bx) -- (8+5*\epsilonnull / 2, 3) node[pos=0.5, right] {};
	 \draw[ ] (bx) -- (p3) node[pos=0.5, right] {};
	 \draw[dashed] (bx) -- (8+7*\epsilonnull / 2, 3) node[pos=0.5, right] {};
	 \draw[ ] (bx) -- (b0) node[pos=0.5, right] {};
	 \draw[dashed] (bx) -- (6-\epsilonnull/2,0) node[pos=0.5, right] {};
	 \draw[ ] (bx) -- (b1) node[pos=0.5, right] {};
	 \draw[dashed] (bx) -- (6-3*\epsilonnull/2,0) node[pos=0.5, right] {};
	 \draw[ ] (bx) -- (b2) node[pos=0.5, right] {};
	 \draw[dashed] (bx) -- (6-5*\epsilonnull/2,0) node[pos=0.5, right] {};
	 \draw[ ] (bx) -- (b3) node[pos=0.5, right] {};
	 \draw[dashed] (bx) -- (6-7*\epsilonnull/2,0) node[pos=0.5, right] {};
	 \draw[ ] (bx) -- (b4) node[pos=0.5, right] {};
	 \draw[ dotted] (4.5,3) -- (9.5,3);
	 \draw[ dotted] (4.5,0) -- (9.5,0);
 
	  \draw[ultra thick] (b0) -- (p0) node[pos=0.5, right] {};

	\draw[ ] (b4) -- ++(0,-0.4) {};
	\draw[ ] (b0) -- ++(0,-0.4) {};
	\draw[style={<->}] (6-4*\epsilonnull,-0.35) -- (6,-0.35) node[pos=0.5, below] {$d_i^o$};
 
	  \draw [decorate,decoration={brace,amplitude=5pt}] (8, 3) -- (8 + \epsilonnull,3) node [pos=0.5, yshift=12pt] {$\varepsilon_i^o$};
		 \end{scope}
		\end{tikzpicture}
		\end{center}\caption{The recursion step on a halving segment $s$. 
	The new plain points $p^{(0)},\ldots, p^{(a_i-1)}$ are the children of $p$, and $b^{(0)},\ldots, b^{(a_i)}$ 
	are the children of $b$. The new bold point $q$ is the child of $b$, and $q$ is assigned to $s$.}\label{fig:recursion}
		\end{figure}

We also say that the point $q$ is the {\em child} of $b$ and $b$ is the {\em
  parent} of $q$.
Finally, we say that the point $q$ is 
{\em assigned to} segment $pb$.

Extend the relations ``parent'' and ``child'' to 
their transitive closure and call the resulting relation
{\em ``ancestor''} and {\em ``descendant''}, respectively.

A halving segment $s\in H^o_{i}$ is the 
{\em $k$-th
ancestor} 
({\em $k$-th descendant}) 
of $s'\in H^o_{i'}$, if it is an ancestor (descendant) and $i'=i+k$ ($i'=i-k$).
Similarly, a point $p\in S^o_{i}$ is the 
{\em $k$-th
ancestor} 
({\em $k$-th descendant}) 
of $p'\in S^o_{i'}$, if it is an ancestor (descendant) and $i'=i+k$ ($i'=i-k$).

For any $s\in H^o_{i}$ or $p\in S^o_{i}$
and $k>0$, let $Desc_k(s)$ or $Desc_k(p)$ denote the set of the $k$-th
descendants of $s$ or $p$, respectively, and $Desc(s)$ or $Desc(p)$ denote the set of all of its descendants.

\bigskip

Now we  prove the \emph{correctness of the construction}, that is, the segments in $H_i^o$ are halving segments of $S_i^o$.
\medskip

\noindent {\bf Definition 3.} 
(a) Let $\ell$ be a non-horizontal line and let $q$ be a point. The {\em horizontal
distance} of $\ell$ and $q$,  $d(\ell, q)$, is the length of the unique
horizontal segment, one of whose endpoints is $q$ and  the other one is on $\ell$.

(b) Let $s$ be a non-horizontal segment with endpoints $b =(x_1, y_1)$, 
$p = (x_2, y_2)$,
where $0\le y_1<y_2\le 1$, and let $\ell$ be its line. 
Let $\bar{b}=(\bar{x}_1, 0)$ and $\bar{p}=(\bar{x}_2, 1)$  
be the intersections of $\ell$ with the
line $(y=0)$ and $(y=1)$, respectively.
Call $\bar{s}=\bar{b}\bar{p}$ the {\em  extension} of $s$.
  
(c) For any segment $s$, the {\em horizontal strip} $HS(s)$ 
is the closed strip bounded by the horizontal lines through the endpoints of $s$. 

(d)  Let $r$ be a point in  $HS(s)$.
The {\em horizontal distance} of $s$ and $r$,  $d(s, r)$, is the 
horizontal distance of $r$ and the line determined by $s$.

(e) Let $\alpha>0$. The {\em $\alpha$-strip} of $s$, $S_{\alpha}(s)$, is the set of points $q$ such
that
$q\in HS(\bar{s})$ and $d(\bar{s}, q)\le\alpha$.
See Figure~\ref{fig:stripdef}.
\medskip

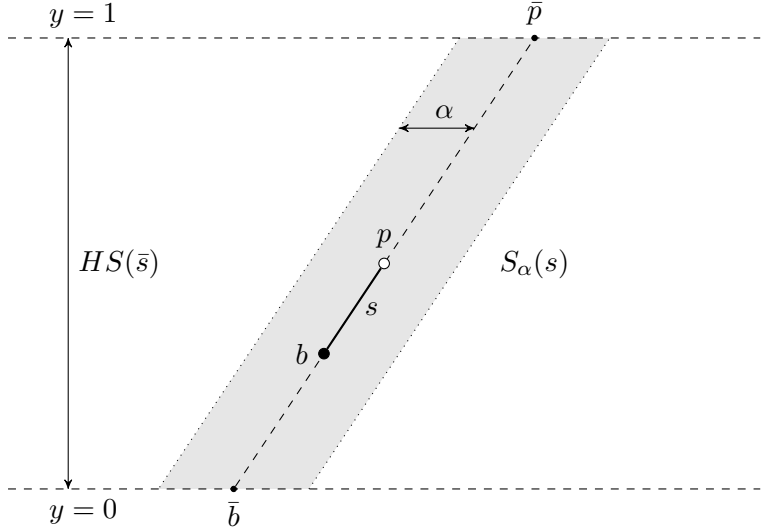
\begin{figure}
	
		\begin{center}
		\begin{tikzpicture}[>=stealth', scale=2]
			
	\fill [ fill=szurke] (-0.5,0) -- (0.5,0) -- (2.5,3) -- (1.5,3)  -- (-0.5,0);
	 \draw[ dotted] (0.5,0) -- (2.5,3);
 	\draw[ dotted] (1.5,3)  -- (-0.5,0);
		 \tikzstyle{b}=[draw,circle,fill=black,minimum size=4,inner sep=0]
		 \tikzstyle{t}=[draw,circle,fill=white,minimum size=4,inner sep=0]
		 \tikzstyle{tex}=[draw,circle,fill=black,minimum size=2,inner sep=0]
 
		 \node[tex] (b) at (0,0) [label=below:$\bar{b}$] {};
		 \node[tex] (p) at (2,3) [label=above:$\bar{p}$] {};
		  \draw[ dashed] (b) -- (p) node[pos=0.5, right] {};
 		\node[b] (px) at ($(p)!0.7! (b)$)  [label=left:$b$] {};
		\node[t] (bx) at ($(p)!0.5! (b)$)  [label=above:$p$] {};

		 \draw[ thick] (bx) -- (px) node[pos=0.5, right] {$s$};
		 \draw[ dashed] (-1.5,3) -- (3.5,3) node[pos=0.1, above] {$y = 1$};;
		 \draw[ dashed] (-1.5,0) -- (3.5,0) node[pos=0.1, below] {$y = 0$};;

	 \node at (2,1.5) {$S_\alpha (s)$};
		 \draw[<->] ($(b)!0.8!(p)$) -- ($(-0.5,0)!0.8! (1.5,3)$) node[pos=0.4, above] {$\alpha$};
		 \draw[<->] ($(-1.1,3)$) -- ($(-1.1,0)$) node[pos=0.5, right] {$HS(\bar{s})$};

		\end{tikzpicture}
		\end{center}
	\caption{The strips and the extension of $s$.}\label{fig:stripdef}
\end{figure}

We show that for any halving segment $s$  in $H^o_i$,
all of its descendant segments are in a very narrow strip of $s$, in a very strong
sense, 
while 
other points, that is, those points which are not endpoints of descendant
segments,  
will never get into that strip. See Figure~\ref{fig:strip2}.

\begin{claim}\label{stripproperty1}
Let $i\ge 0$ and $s\in H^o_i$. 
For every $r\in Desc(s)$, 
the extension $\bar{r}$
of $r$ lies in 
 $S_{\alpha}(s)$, where $\alpha=2^{i+2}d^o_{i+1}$.    
\end{claim}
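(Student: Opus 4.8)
The plan is to fix the order $o$ and prove, by induction on the descendant depth $k\ge 1$, that for every $i$, every $s\in H^o_i$ and every $r\in Desc_k(s)$ the extension $\bar r$ lies in $S_\alpha(s)$ with $\alpha=2^{i+2}d^o_{i+1}$. Two preliminary observations make the conclusion equivalent to a single affine inequality. First, every segment occurring in any $H^o_j$ is non-horizontal (its endpoints sit at distinct heights, as the vertical span is positive) and both endpoints have $y$-coordinate in $[0,1]$ (immediate inductions on the index: $H^o_0$ has the single segment $(0,0)(1,1)$, and an endpoint of a child segment is either an endpoint of the parent or the assigned bold point $q$, which lies at the midpoint height of the parent, hence still in $[0,1]$). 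Thus each such segment $t$ lies on a line along which $x$ is an affine function $x_t(\cdot)$ of $y$, and $\bar t=\{(x_t(y),y):0\le y\le 1\}$. Second, $HS(\bar s)$ is exactly the slab $\{0\le y\le 1\}$, so $\bar r\subseteq S_\alpha(s)$ holds precisely when $|x_r(y)-x_s(y)|\le\alpha$ for all $y\in[0,1]$. Since $x_r(\cdot)-x_s(\cdot)$ is affine, and since for $k\ge 2$ we may factor through the child $s''\in H^o_{i+1}$ of $s$ that has $r$ as a descendant, the problem reduces to a one-step bound on $\sup_{[0,1]}|x_{s''}-x_s|$ together with the triangle inequality $|x_r(y)-x_s(y)|\le|x_r(y)-x_{s''}(y)|+|x_{s''}(y)-x_s(y)|$.

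For the one-step bound I would first note that all segments of $H^o_j$ share the same vertical span $2^{-j}$ (another index induction, each child halving its parent's span). Let $s=pb\in H^o_i$ with plain endpoint $p$ and bold endpoint $b$, let $q$ be the bold point assigned to $s$, and let $s''$ be a child of $s$, so $s''=q\,p^{(k)}$ with $0\le k\le a_{i+1}-1$ or $s''=q\,b^{(k)}$ with $0\le k\le a_{i+1}$, the offsets using the step $\varepsilon^o_{i+1}$. At the midpoint height $y_q=(y_p+y_b)/2$ of $s$ the line of $s$ passes through the midpoint of $s$, while the line of $s''$ passes through $q$, which is exactly $\varepsilon^o_{i+1}/4$ to the left of that midpoint; hence $|x_{s''}(y_q)-x_s(y_q)|=\varepsilon^o_{i+1}/4$. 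At the height $y_p$ of $p$ (if $s''=q\,p^{(k)}$) the line of $s''$ passes through $(x_p+k\varepsilon^o_{i+1},y_p)$, so $|x_{s''}(y_p)-x_s(y_p)|=k\varepsilon^o_{i+1}$, and symmetrically $|x_{s''}(y_b)-x_s(y_b)|=k\varepsilon^o_{i+1}$ at height $y_b$ if $s''=q\,b^{(k)}$. The two heights at which $x_{s''}-x_s$ is thus pinned down differ by $\tfrac12\cdot 2^{-i}=2^{-(i+1)}$, so a direct computation shows the slope (in $y$) of the affine function $x_{s''}-x_s$ is, in absolute value, $2^{i+1}|k\pm\tfrac14|\,\varepsilon^o_{i+1}\le 2^{i+1}(a_{i+1}-\tfrac14)\varepsilon^o_{i+1}<2^{i+1}a_{i+1}\varepsilon^o_{i+1}=2^{i+1}d^o_{i+1}$; since $y_q\in[0,1]$, this gives $\sup_{y\in[0,1]}|x_{s''}(y)-x_s(y)|\le\varepsilon^o_{i+1}/4+2^{i+1}d^o_{i+1}=:\delta_i$.

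It remains to run the induction. For $k=1$, $r=s''$ and we only need $\delta_i\le\alpha$: since $\varepsilon^o_{i+1}/4=d^o_{i+1}/(4a_{i+1})<d^o_{i+1}\le 2^{i+1}d^o_{i+1}$, indeed $\delta_i<2\cdot 2^{i+1}d^o_{i+1}=2^{i+2}d^o_{i+1}=\alpha$. For $k\ge 2$, the inductive hypothesis applied to $s''\in H^o_{i+1}$ and $r\in Desc_{k-1}(s'')$ gives $\sup_{[0,1]}|x_r-x_{s''}|\le 2^{(i+1)+2}d^o_{(i+1)+1}=2^{i+3}d^o_{i+2}$, so by the triangle inequality $\sup_{[0,1]}|x_r-x_s|\le\delta_i+2^{i+3}d^o_{i+2}$, and one checks $\delta_i+2^{i+3}d^o_{i+2}\le 2^{i+2}d^o_{i+1}$. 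Substituting $d^o_j=2^{-(4o+3)j}$ and $a_{i+1}=2^{i+1}$ gives $2^{i+3}d^o_{i+2}=4\cdot 2^{-(4o+3)}\cdot 2^{i+1}d^o_{i+1}\le\tfrac12\cdot 2^{i+1}d^o_{i+1}$ and $\varepsilon^o_{i+1}/4\le d^o_{i+1}/8\le\tfrac1{16}\cdot 2^{i+1}d^o_{i+1}$, so $\delta_i+2^{i+3}d^o_{i+2}\le(1+\tfrac1{16}+\tfrac12)\,2^{i+1}d^o_{i+1}<2\cdot 2^{i+1}d^o_{i+1}=\alpha$, which closes the induction.

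The only delicate point is this last arithmetic: the one-step deviation $\delta_i$ is already of the same order $2^{i+1}d^o_{i+1}$ as the target $\alpha=2^{i+2}d^o_{i+1}$, so there is barely a factor-$2$ of room, and it is consumed precisely by the geometric tail coming from the deeper generations. This is exactly where the fast decay built into the parameters matters: the choice $\varepsilon^o_i=2^{-(4o+4)i}$ makes $d^o_{i+2}/d^o_{i+1}=2^{-(4o+3)}$ exponentially small, so that the generation-$(i+1)$ contribution is at most half of the generation-$i$ budget; everything else is a routine affine-function estimate.
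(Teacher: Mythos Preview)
Your proof is correct and follows essentially the same approach as the paper: bound the one-step deviation between a segment's extension and that of a child (the paper gets $2^{i+1}d^o_{i+1}$ via a small horizontal shift, you get the equivalent $\varepsilon^o_{i+1}/4+2^{i+1}d^o_{i+1}$ directly), then accumulate along the chain from $s$ to $r$ using the geometric decay of $d^o_{i+k}$. The only cosmetic differences are that the paper tracks the two extension endpoints $\bar p_k,\bar b_k$ explicitly and sums the telescoping displacements, whereas you phrase everything via the affine function $x_t(y)$ and close with an induction on the depth~$k$; these are the same computation in different clothing.
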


\begin{proof}
Suppose that  
$s\in H^o_i$,  $s=pb$ where $p, b\in S^o_i$, $p$ is plain, $b$ is bold.
Let $r\in Desc(s)$.
Then there is a sequence $s = s_0, s_1, \ldots, s_{j} = r$
such that for every $0\le k\le j-1$, $s_k\in H^o_{i+k}$, and 
$s_{k}$ is a parent of $s_{k+1}$.
For every $0\le k\le j$, let 
$\bar{s}_k=\bar{p}_k\bar{b}_k$ 
be the {\em  extension} of $s_k$. For the exact definition see Definition~3~(b).

Let $s_1=b_1p_1$ where $b_1, p_1\in S^o_{i+1}$,   $p_1$ is plain, $b_1$ is bold.
Move $s_1$ horizontally to the right by $\varepsilon^o_{i+1}/4$, let 
 $s'_1=b'_1p'_1$ be the resulting segment and let 
$\bar{s}_1'=\bar{p}_1'\bar{b}_1'$ be the {extension} of $s'_1$.
Then $b'_1$ is exactly the midpoint of $s$.
The point $p'_1$ has the same $y$-coordinate as either $p$ or $b$, 
and their (horizontal) distance is at most $d^o_{i+1}-\varepsilon^o_{i+1}/4$.
Consequently,
$|\bar{p}_0\bar{p}_1'|\le 2^{i+1}(d^o_{i+1}-\varepsilon^o_{i+1}/4)$ 
so $|\bar{p}_0\bar{p}_1|\le 2^{i+1}d^o_{i+1}.$
Similarly, 
$|\bar{b}_0\bar{b}_1'|\le 2^{i+1}(d^o_{i+1}-\varepsilon^o_{i+1}/4)$ so 
$|\bar{b}_0\bar{b}_1|\le 2^{i+1}d^o_{i+1}.$

We can show by the same argument that for any $1\le k\le j$,
$|\bar{p}_{k-1}\bar{p}_{k}|\le 2^{i+k}d^o_{i+k}$.
and 
$|\bar{b}_{k-1}\bar{b}_{k}|\le 2^{i+k}d^o_{i+k}$.

Therefore, $|\bar{p}_0\bar{p}_{j}|\le 
\sum_{k=1}^{j}2^{i+k}d^o_{i+k}<2^{i+2}d^o_{i+1}=\alpha$ and
$|\bar{b}_0\bar{b}_{j}|\le \sum_{k=1}^{j}2^{i+k}d^o_{i+k}<2^{i+2}d^o_{i+1}=\alpha$. 
\end{proof}

\begin{figure}
	
			\begin{center}
			\begin{tikzpicture}[>=stealth', scale=2]
			
		\fill [ fill=szurke] (-0.5,0) -- (0.5,0) -- (2.5,3) -- (1.5,3)  -- (-0.5,0);
		 \draw[ dotted] (0.5,0) -- (2.5,3);
	 	\draw[ dotted] (1.5,3)  -- (-0.5,0);
		 \draw[ dotted] (1,0) -- (3,3);
	 	\draw[ dotted] (1,3)  -- (-1,0);
			 \tikzstyle{b}=[draw,circle,fill=black,minimum size=3,inner sep=0]
			 \tikzstyle{t}=[draw,circle,fill=white,minimum size=3,inner sep=0]
 
			 \coordinate  (b) at (0,0) {};
			 \coordinate  (p) at (2,3) {};
		 	\coordinate  (pxx) at (0.2,0)  {};
		 	\coordinate  (bxx) at (1.7,3)  {};
			 
	 		\draw[thin, dashed] (bxx) -- (pxx) node[pos=0.5, right] {};
	 			 \draw[ dashed] (b) -- (p) node[pos=0.5, right] {};
	 		\node[b] (px) at ($(p)!0.7! (b)$) {};
			\node[t] (bx) at ($(p)!0.3! (b)$)   {};

	 		\node[t] (pxy) at ($(pxx)!0.7! (bxx)$) {};
			\node[b] (bxy) at ($(pxx)!0.53! (bxx)$)   {};
		
			 \draw[ thick] (bx) -- (px) node[pos=0.5, right] {$s$};
			 \draw[ ] (bxy) -- (pxy) node[pos=0.5, left] {$r$};
			 \draw[ dashed] (-1.5,3) -- (3.5,3) node[pos=0.1, above] {$y = 1$};;
			 \draw[ dashed] (-1.5,0) -- (3.5,0) node[pos=0.1, below] {$y = 0$};;
 
		\node[b] (bx) at (2.5,1.5) [label=right:$q$] {};

			\end{tikzpicture}
			\end{center}\caption{Point $r$ is a descendant of $s$, but $q$ is not.}\label{fig:strip2}
	
\end{figure}
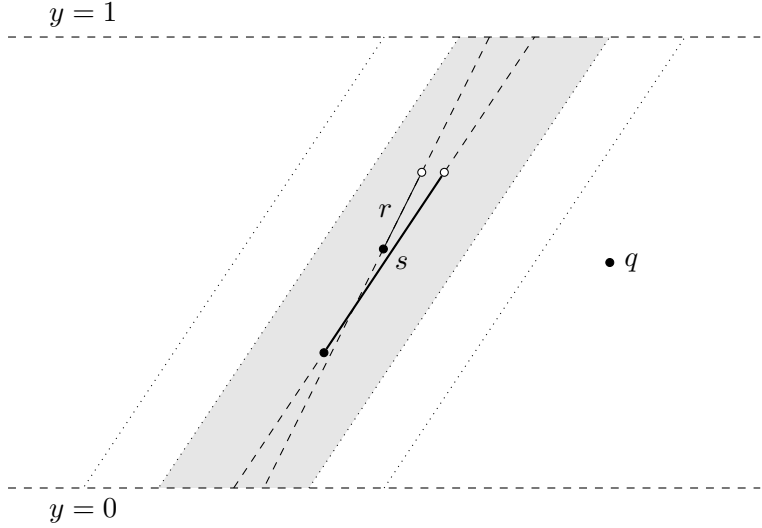

\begin{claim}\label{stripproperty2}
(a) Let $j\ge i\ge 0$ and $s\in H^o_i$. 
Suppose that $q\in S^o_j$ is not an endpoint of $s$ and not a descendant of $s$. 
Then 
$q\not\in S_{2\alpha}(s)$ where $\alpha=2^{i+2}d^o_{i+1}$. 

(b) Suppose that $q_i\in S^o_i$, $s_i\in H^o_i$ and $q_i$ is not an endpoint of $s_i$.
Let $q_{i+1}\in S^o_{i+1}$ be a child of $q_i$, 
$s_{i+1}\in H^o_{i+1}$ be a child of $s_i$.

Then $q_i$ is below the line of $s_i$ if and only if 
$q_{i+1}$ is below the line of $s_{i+1}$.

\end{claim}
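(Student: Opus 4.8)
\medskip
\noindent\textbf{Proof proposal.} The plan is to prove (a) and (b) simultaneously by induction on $i$: at each step I first establish (a) for the current $i$ (and all $j\ge i$) and then deduce (b) for that $i$ from (a). Both parts are vacuous for $i=0$, since the two points of $S^o_0$ are exactly the endpoints of the single segment of $H^o_0$ and every point of every $S^o_j$ descends from one of them. So assume the statement through level $i$, fix $s_{i+1}\in H^o_{i+1}$ with parent $s_i=p_ib_i\in H^o_i$ ($p_i$ plain, $b_i$ bold), let $q^\ast$ be the bold point assigned to $s_i$, so the endpoints of $s_{i+1}$ are $q^\ast$ and a plain child $v$ of $p_i$ or of $b_i$; set $\alpha_k=2^{k+2}d^o_{k+1}$. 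The recurring tool is this consequence of Claim~\ref{stripproperty1}: since $\bar s_{k+1}\in S_{\alpha_k}(s_k)$ and both extensions run from $y=0$ to $y=1$, the line of any descendant of $s_k$ stays within horizontal distance $\alpha_k$ of the line of $s_k$ for every $y\in[0,1]$; with the fast decay of $d^o_k$ (here $o\ge k$) this gives the \emph{nesting} $S_{2\alpha_{k+1}}(s_{k+1})\subseteq S_{2\alpha_k}(s_k)$ for a child, hence along any descendant chain.

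\smallskip
\noindent\emph{Part (a) at level $i+1$.} Let $q\in S^o_j$, $j\ge i+1$, be neither an endpoint nor a descendant of $s_{i+1}$. If $q$ is not a descendant of $s_i$ either, then --- $q$ is not an endpoint of $s_i$ since it lies at a higher level --- the inductive (a) at level $i$ gives $q\notin S_{2\alpha_i}(s_i)$, and nesting gives $q\notin S_{2\alpha_{i+1}}(s_{i+1})$. Otherwise $q$ descends from $s_i$; let $u\in S^o_{i+1}$ be its ancestor at level $i+1$, so $u$ is a child of $p_i$ or of $b_i$, $u\notin\{q^\ast,v\}$, and $q\notin Desc(q^\ast)\cup Desc(v)$. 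If $u$ is plain, the horizontal distance from $u$ to the line of $s_{i+1}$ (evaluated at the height of $u$) is at least $\tfrac12\varepsilon^o_{i+1}$ --- this is precisely what the shift $-\varepsilon^o_{i+1}/4$ in the definition of $q^\ast$ is for, preventing the line of $s_{i+1}$ from running through $p_i^{(0)}$ or $b_i^{(0)}$; since the descendants of a plain point share its $y$-coordinate and lie within horizontal distance $\sum_{k>i+1}d^o_k<2d^o_{i+2}$ of it, and $\tfrac12\varepsilon^o_{i+1}$ exceeds $2\alpha_{i+1}+2d^o_{i+2}$ (using $o\ge i+1$), we get $q\notin S_{2\alpha_{i+1}}(s_{i+1})$. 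If $u$ is bold, it is the bold point assigned to some $t_i\in H^o_i$ with $t_i\ne s_i$ incident to $b_i$, say $t_i=b_iw_i$ with $w_i$ plain; then $w_i$ is not an endpoint of $s_i$, so inductive (a) at level $i$ applies to $w_i$. This case is the delicate one: $Desc(u)$ is not confined near $u$ but lies in $S_{\alpha_i}(t_i)$ and clusters toward $b_i$ to varying depths, and one must bound the distance from such cluster points to the line of $s_{i+1}$ by working from the shared vertex $b_i$ --- although the lines of $s_i$, $t_i$ and $s_{i+1}$ all crowd near $b_i$, two sibling halving segments meet at $b_i$ at an angle that, though minuscule, is not small enough to let the cluster (which sits a definite distance from $b_i$) come within $2\alpha_{i+1}$ of the line of $s_{i+1}$; the quantitative bounds on that angle and on the cluster's distance from $b_i$ come from the parameters of the construction together with (b) at levels $\le i$.

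\smallskip
\noindent\emph{Part (b) at level $i+1$.} Since $q_i$ is not an endpoint of $s_i$ and lies at level $i$, the inductive (a) at level $i$ gives $d(\text{line }s_i,q_i)>2\alpha_i$. If $q_{i+1}$ is a plain child of $q_i$ it is within horizontal distance $d^o_{i+1}$ of $q_i$ at the same height, hence on the same side of the line of $s_i$ at distance $>2\alpha_i-d^o_{i+1}>\alpha_i$; as the line of $s_{i+1}$ stays within $\alpha_i$ of the line of $s_i$, it has not swept past $q_{i+1}$, so $q_{i+1}$ lies on the same side of the line of $s_{i+1}$, which is exactly (b). If $q_{i+1}$ is a bold child of $q_i$, then $q_i$ is bold and $q_{i+1}$ is a $\tfrac14\varepsilon^o_{i+1}$-shift of the midpoint of some $t_i=q_iw_i\in H^o_i$ with $t_i\ne s_i$; when $w_i$ lies on the same side of the line of $s_i$ as $q_i$ (in particular when $w_i=p_i$), this midpoint lies on that side, far enough for the same crossing argument, and when $w_i$ lies on the opposite side one additionally checks, using the quantitative (a) at level $i$ together with Claim~\ref{stripproperty1}, that the two distances to the line of $s_i$ cannot nearly cancel.

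\smallskip
\noindent The step I expect to be the main obstacle is the bold sub-case of (a) (and, to a lesser extent, of (b)): a bold point's descendants are not trapped in a small disc around it, because the midpoint construction scatters them along the incident segment $t_i$ and through its own subdivision. Taming them requires re-entering the recursion for $t_i$, invoking Claim~\ref{stripproperty1} for $t_i$ as well as for $s_i$, and exploiting the exact geometry of the construction --- the $\varepsilon/4$ shift, and the fact that sibling halving segments meet at their common vertex either at a minuscule angle or at an angle near $\pi$ --- so that the separation one needs, $2\alpha_{i+1}$, stays comfortably below the available gap even though every relevant line passes close to $b_i$.
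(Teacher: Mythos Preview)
Your inductive framework is a reasonable reorganization, and your plain sub-case is correct, but the proposal has a genuine gap: the bold sub-case of (a) is exactly where the content of the claim lives, and you do not prove it. Worse, the induction does not help there. When $u$ is the bold point assigned to a sibling $t_i\ne s_i$ of $s_i$, the descendants of $u$ spread through $HS(t_i)$ and can sit at heights $y$ with $|y-y(b_i)|$ as small as $2^{-j}$; since the lines of $s_i$ and $t_i$ meet at $b_i$, knowing only the angle between them (which is what ``apply (a) at level $i$ to $w_i$'' gives you) yields a separation proportional to that height gap, and you have no inductive control over $j$. Your appeal to (b) at levels $\le i$ gives side information, not the quantitative distance bound you need. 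There is also an ambiguity in your case split: under the paper's reading of ``$q$ is a descendant of $s$'' (namely, $q$ is an endpoint of a segment in $Desc(s)$), the conclusion $u\ne q^\ast$ is false---$q$ can descend as a point from $q^\ast$ through a sibling $s'_{i+1}\ne s_{i+1}$ without being a descendant of $s_{i+1}$---and that missed case is again the hard one.

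The paper avoids all of this by tracking \emph{segment}-ancestry rather than point-ancestry. For bold $q=b_j$ it follows the chain $s'_0,\dots,s'_{j-1}$ with $b_{k+1}$ assigned to $s'_k$, compares it to the ancestor chain $s_0,\dots,s_i=s$, and lets $m$ be the first level where they differ; then $s_m$ and $s'_m$ are siblings sharing their bold endpoint, and on the horizontal line $\ell$ through $q$ one has the explicit lower bound $|tt_m|\ge 2^{m-j-1}\varepsilon^o_m$ (the factor $2^{m-j}$ is precisely the height ratio you were missing), while the drift $\sum_{k\ge m}|t_kt_{k+1}|\le 2d^o_{m+1}$ is tiny by comparison; this yields $q\notin S_{2\alpha_m}(s_m)$, and nesting finishes. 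The plain case is handled by an analogous chain with a bold prefix and a plain suffix. Part (b) then really does follow from these same estimates, since a child can move horizontally by at most $d^o_{i+1}\ll 2\alpha_i$ and hence cannot cross the strip.
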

\begin{proof}

First we show part (a) of the claim.
Let $j\ge i$, 
$q\in S^o_j$ that is not an endpoint of $s$ or 
any descendant of $s$.


\smallskip

\noindent {\bf Case 1.} {\em Suppose that $q$ is bold.}
There is a sequence $b_0, b_1, \ldots, b_j=q$ such that
for every $k$, $0\le k\le j$, $b_k$ is a bold point in $S^o_k$, and for 
every $k$, $0\le k<j$,
$b_{k}$ is a parent of $b_{k+1}$.
Moreover, there is a sequence 
$s'_0, s'_1, \ldots, s'_{j-1}$ such that
for every $k$, $0\le k<j$, $s'_k\in H^o_k$, 
$b_{k+1}$ is {\em assigned to} $s'_k$, and 
for 
every $k$, $0\le k<j-1$,
$s'_{k}$ is a parent of $s'_{k+1}$.

Similarly, there is a sequence 
$s_0, s_1, \ldots, s_i=s$ such that
for every $k$, $0\le k\le i$, $s_k\in H^o_k$, and for 
every $k$, $0\le k<j$,
$s_{k}$ is a parent of $s_{k+1}$.

By the assumption, if $j>i$, then 
$s'_i\neq s_i$,
if $j=i$ then 
$s'_{i-1}\neq s_{i-1}$. 
Let $m\le i$ be the smallest number with the property that 
$s'_m\neq s_m$.

\smallskip

{\bf Case 1.1} 
{\em Suppose first that $m=i$.} Then $s'_i\neq s_i$ but $s'_{i-1}=s_{i-1}$. 
In this case both $s_i$ and $s'_i$ 
are children of $s_{i-1}=s'_{i-1}$ and  $j>i$ by the previous observations. 
Let $\ell$ be the horizontal line through $q=b_j$. We will analyze the
situation step by step on that line. 
Let $t$ be the intersection of $\ell$ and the line 
of $s_i$.
For $k=i, \ldots, j-1$, let $t_k$ be the intersection of 
$\ell$ and the line of $s'_k$ and let $t_j=q$.
See Figure~\ref{fig:claim23}.

Clearly, $|tt_i|\ge 2^{i-j-1}\varepsilon^o_{i}$.
For any $k$, $i\le k<j$, $|t_kt_{k+1}|\le d^o_{k+1}$.
But 
$$\sum_{k=i}^{j-1}|t_kt_{k+1}|\le \sum_{k=i}^{j-1}d^o_{k+1}\le 2d^o_{i+1},$$
therefore, for $\alpha=2^{i+2}d^o_{i+1}$, 
$$|tq|\ge |tt_i|-\sum_{k=i}^{j-1}|t_kt_{k+1}|\ge 2^{i-j-1}\varepsilon^o_{i}-2d^o_{i+1}$$
$$=2^{i-j-1}2^{-(4o+4)i}-2^{-(4o+3)(i+1)+1}\ge 2^{i-o-1}2^{-(4o+4)i}-2^{-(4o+3)(i+1)+1}$$
$$>2^{-(4o+3)(i+1)+i+3}=2\alpha.$$

Consequently, $q\not\in S_{2\alpha}(s)$.
Here we used that $o\ge j$.
\smallskip

{\bf Case 1.2} {\em Suppose now that $i>m$.}
By the previous argument,
$q\not\in S_{2\beta}(s_m)$ where $\beta=2^{m+2}d^o_{m+1}$. 
By Claim \ref{stripproperty1}, $\bar{s}_i\subset S_{\beta}(s_m)$, therefore,
$S_{2\alpha}(s_i)\subset S_{2\beta}(s_m)$, so it follows that 
$q\not\in S_{2\alpha}(s_i)$.

\medskip
	\begin{figure}	
				\begin{center}
				\begin{tikzpicture}[>=stealth', scale=6,  spy using outlines={rectangle,lens={scale=3}, size=8cm, connect spies}]

				 \tikzstyle{pl}=[draw,circle,fill=white,minimum size=4,inner sep=0]
				 \tikzstyle{bo}=[draw,circle,fill=black,minimum size=4,inner sep=0]
				 \tikzstyle{p}=[draw,circle,fill=black,minimum size=2,inner sep=0]
 
				 \coordinate  (b) at (0,0) {};
				 \coordinate  (p) at (2,3) {};
				 \node[pl]  (pb) at (0.9,1.5) {};
		 		\node[pl] (px) at ($(p)!1! (b)$) {};
				\node[bo] (bx) at ($(p)!0.5! (b)$)   {};
 
		\coordinate  (pxx) at (-0.8,0)  {};
		\coordinate  (bxx) at (2.8,3)  {};

		 		\node[pl] (pxy) at ($(pxx)!1! (bxx)$) {};
				\node[bo] (bxy) at ($(pxx)!0.5! (bxx)$)   {};
				\node[p] at (1.375,3 - 2.5*3/8) [label=above left:{\scriptsize $t$}] {};
				\node[p] (q) at (1.585,3 - 2.5*3/8)  [label=above:{\scriptsize ${q}$}] {};
				\node[p] (q1) at (1.635,3 - 2.5*3/8) {};

				\node[bo] (bxyx) at ($(pxy)!0.5! (bxy) + (-0.02,0)$)   {};
				\node[pl] (bnxyx) at ($(pxy)!0.5! (bxy) + (-0.1,0)$)   {};
				\node[p] (bxyxx) at ($(pxy)!0.625! (bxy)$) [label=below :{\scriptsize ${t_i}$}]   {};
				\node[bo] (bnx) at ($(bxyx)!0.5! (pb) + (-0.02,0)$)   {};
				 \draw[ dashed] (bx) -- (p) node[pos=0.8, left] {};
				 \draw[ ] (bx) -- (px) node[pos=0.7, right] {$s_i$};
				 \draw[ ] (bxy) -- (pxy) node[pos=0.6, right] {$s'_i$};
				  \draw[ ] (bxyx) -- (pb) node[pos=0.55, right] {$s'_{i+1}$};
				  \draw[ ] (bnx) -- (bnxyx) node[pos=0.3, left] {\scriptsize ${s'_{i+2}}$};
				 \draw[ dotted] (0,3 - 2.5*3/8) -- (2.8,3 - 2.5*3/8) node[pos=0.1, above] {$\ell$};

		 \spy [height=6cm,width=8cm,magnification=3,connect spies] on (9,12.4)
		             in node [left] at (2.5,0.7);
		 
				\end{tikzpicture}
				\end{center}\caption{Case 1.1 in the proof of Claim 2.2.}\label{fig:claim23}
		\end{figure}
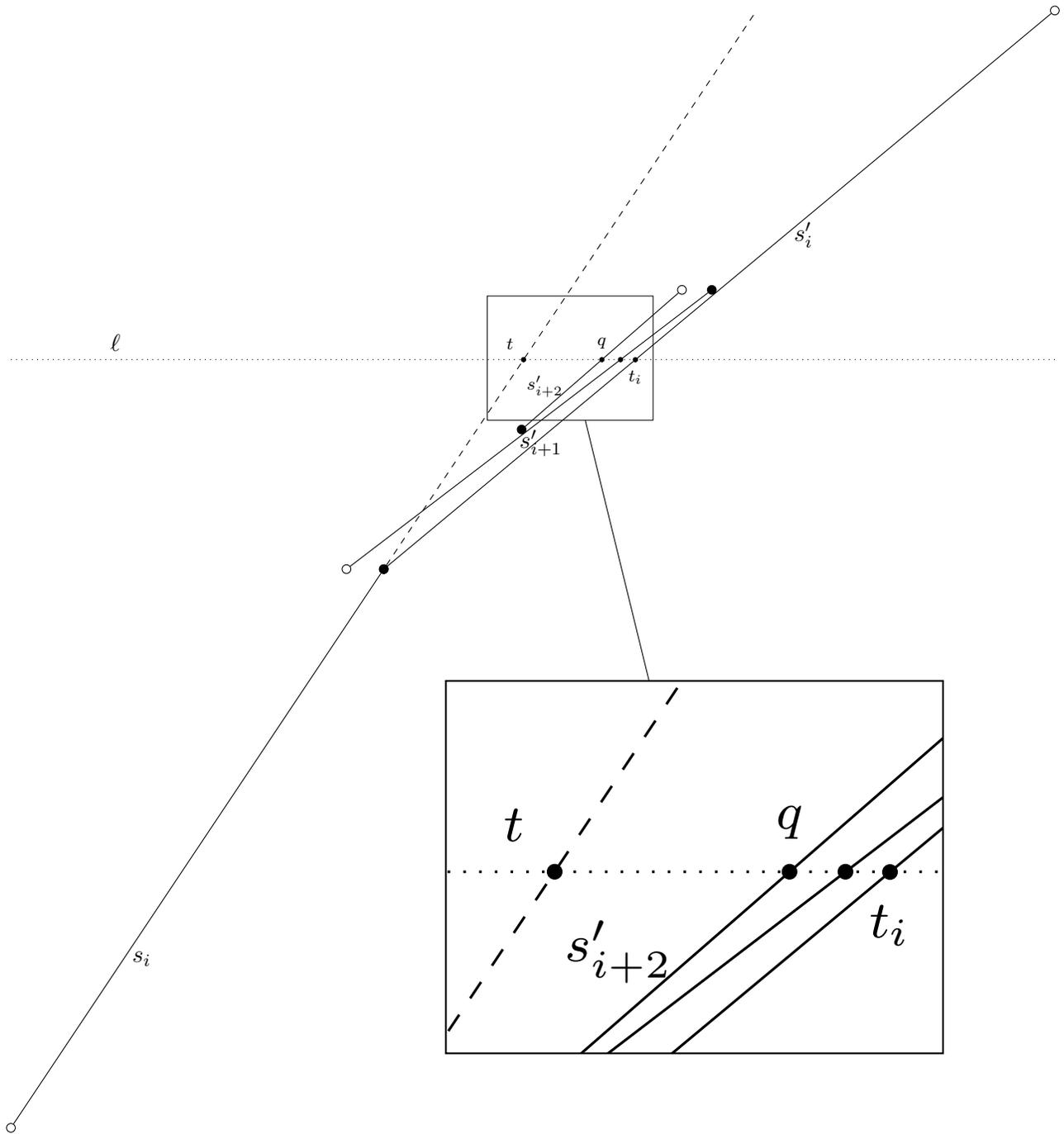



\noindent  {\bf Case 2} {\em Suppose that $q$ is plain.}
The argument will be a more complicated version of the previous one.
There is a sequence $b_0, b_1, \ldots, b_l, p_{l+1}, \ldots,  p_j=q$ such that

\noindent (a) for every $k$, $0\le k\le l$, $b_k$ is a bold point in $S^o_k$, 

\noindent (b) for every $k$, $0\le k<l$, $b_{k}$ is a parent of $b_{k+1}$, 

\noindent (c) for 
every $k$, $l<k\le j$, $p_k$ is a plain point in $S^o_k$, 

\noindent (d) for 
every $k$, $l<k<j$,
$p_{k}$ is a parent of $p_{k+1}$, and 

\noindent (e) $b_{l}$ is a parent of $p_{l+1}$.

Moreover, there is a sequence 
$s'_1, s'_2, \ldots, s'_{l-1}$ such that
for every $k$, $0\le k\le l-1$, $s'_k\in H^o_k$, for 
every $k$, $0\le k<l-1$,
$s'_{k}$ is a parent of $s'_{k+1}$, and 
$b_{k+1}$ is {\em assigned to} $s'_k$.

Similarly, there is a sequence 
$s_0, s_1, \ldots, s_i=s$ such that
for every $k$, $0\le k\le i$, $s_k\in H^o_k$, and for 
every $k$, $0\le k<i$,
$s_{k}$ is a parent of $s_{k+1}$.



\smallskip

{\bf Case 2.1} {\em Suppose first that $l\ge i$.}
We proceed almost exactly as in Case 1.

By the assumption, if $l>i$, then 
$s'_i\neq s_i$,
and if $l=i$ then 
$s'_{i-1}\neq s_{i-1}$. 
Let $m\le i$ be the smallest number with the property that 
$s'_m\neq s_m$.

Then $s'_m\neq s_m$ but both of them are children of $s_{m-1}=s'_{m-1}$. 
Let $\ell$ be the horizontal line through $q$. 
Observe that   $b_l, p_{l+1}, \ldots,  p_j=q$ are all on $\ell$.
Let $t$ be the intersection of $\ell$ and the line of $s_m$.
For $m\le k\le l-1$, let $t_k$ be the intersection of $\ell$ and the line of
$s'_k$, 
let $t_l=b_l$, and for $k=l+1, \ldots, j$, let 
$t_k=p_k$.


Clearly, $|tt_m|\ge 2^{m-l-1}\varepsilon^o_{m}$.
For any $k$, $m\le k< j$, $|t_kt_{k+1}|\le d^o_{k+1}$.
But 
$$\sum_{k=m}^{j-1}|t_kt_{k+1}|\le \sum_{k=m}^{j-1}d^o_{k+1}\le 2d^o_{m+1},$$
therefore, for $\beta=2^{m+2}d^o_{m+1}$, 
$$|tq|\ge |tt_m|-\sum_{k=m}^{j-1}|t_kt_{k+1}|\ge 2^{m-l-1}\varepsilon^o_{m}-2d^o_{m+1}\ge 2\beta.$$
Consequently, $q\not\in S_{2\beta}(s_m)$.
By Claim \ref{stripproperty1}, $\bar{s}_i\in S_{\beta}(s_m)$. 
Recall, that  $\alpha=2^{i+2}d^o_{i+1}$. 
Therefore,
$S_{2\alpha}(s_i)\subset S_{2\beta}(s_m)$, so it follows that 
$q\not\in S_{2\alpha}(s_i)$.

\smallskip

{\bf Case 2.2} {\em Suppose now that $l< i$.}
If $s'_{l-1}\neq s_{l-1}$, then we can proceed exactly as in Case 2.1. 
Let $m\le i$ be the smallest number with the property that 
$s'_m\neq s_m$, and do the same calculation. 

So we can assume that $s'_{l-1}=s_{l-1}$. In this case $b_l$ is an endpoint of
$s_l$. For simplicity denote $b_l$ also as $p_l$. 
By the assumption, $p_i$ is not an endpoint of $s_i$.
Let $m$ be the smallest number such that $p_m$ is not the endpoint of $s_m$. 
Now we do the same calculation again.

Let $\ell$ be the horizontal line through $p_m, \ldots, p_j$.  
Let $t$ be the intersection of $\ell$ and the line of $s_m$.
We have $|tp_m|\ge\varepsilon^o_{m}/2$.
For any $k$, $m\le k< j$, $|p_kp_{k+1}|\le d^o_{k+1}$.
So
$$\sum_{k=m}^{j-1}|p_kp_{k+1}|\le \sum_{k=m}^{j-1}d^o_{k+1}\le 2d^o_{m+1},$$
therefore, for $\beta=2^{m+2}d^o_{m+1}$, 
$$|tq|\ge |tp_m|-\sum_{k=m}^{j-1}|p_kp_{k+1}|\ge\varepsilon^o_{m}/2-2d^o_{m+1}\ge 2\beta.$$
Consequently, $q\not\in S_{2\beta}(s_m)$.
By Claim \ref{stripproperty1}, $\bar{s}_i\subset S_{\beta}(s_m)$, therefore,
$S_{2\alpha}(s_i)\subset S_{2\beta}(s_m)$, so it follows that 
$q\not\in S_{2\alpha}(s_i)$.

This concludes the proof of part (a). Part (b) follows directly from the
calculations, we only have to observe that taking a child of the 
point $q$, it cannot 
``jump'' to the other side of the strip $S_{2\alpha}(s)$.
\end{proof}

\bigskip

We are ready to prove that 
the segments in $H^o_i$, $i\le o$,
 are indeed halving segments. Let $o>0$ be fixed. 
First we show that the new lines are ``locally'' halving. 
Suppose that $s=pb\in H^o_{i-1}$ where $p, b\in S^o_{i-1}$, $p$ is plain, $b$ is
bold. To construct $S^o_i$ and $H^o_i$, we replace
$p$ with the arithmetic progression 
$p^{(0)}, \ldots,  p^{(a_i-1)}$ of plain points, replace $b$ with
the arithmetic progression 
$b^{(0)}, \ldots,  b^{(a_i)}$ of plain points and add the bold
point $b'$ close to the midpoint of $pb$ as
described before. Then for any $k$, the line $b'p^{(k)}$ has 
$p^{(0)}, \ldots, p^{(k-1)}, b^{(k+1)}, \ldots,  b^{(a_i)}$ on one side and 
$b^{(0)}, \ldots,  b^{(k)}, p^{(k+1)}, \ldots,  p^{(a_i-1)}$ on the other side. 
We can argue the same way for the line $b'b^{(k)}$. 
So, these lines halve the set
of plain points that replace $p$ and $b$. 
Call this the {\em locally halving property} of the halving segments.

For every $i$, let $B_i$ resp. $P_i$ be the set of bold resp. plain
points of $S^o_i$.
We prove by induction that every  $s\in H^o_i$
halves both of the sets $B_i$ and $P_i$. 
More precisely, the two open halfplanes determined by the line of $s$ contain
the same number of points of 
 $B_i$ 
and they also contain the same number of points of $P_i$.
For $i=0, 1$ it  is trivial. Assume that
$s_{i-1}=(p_{i-1}, b_{i-1})\in H^o_{i-1}$
halves the sets $B_{i-1}$ and $P_{i-1}$. 

The set $B_i=B_i^{(1)}\cup B_i^{(2)}$ where 
$B_i^{(1)}$ is the set of bold children of points of $B_{i-1}\setminus\{b_{i-1}\}$ and  
$B_i^{(2)}$ is the set of bold children of $b_{i-1}$. 

Similarly, 
$P_i=P_i^{(1)}\cup P_i^{(2)}\cup P_i^{(3)}$
where
$P_i^{(1)}$ is the set of (plain) children of points of $P_{i-1}\setminus\{p_{i-1}\}$,
$P_i^{(2)}$ is the set of plain children of points of $B_{i-1}\setminus\{b_{i-1}\}$, and
$P_i^{(3)}$ is the set of plain children of $p_{i-1}$ and $b_{i-1}$. 

Let $s_i$ be a child of $s_{i-1}$.
It follows from Claim \ref{stripproperty2}
that if $p\in P_{i-1}$ is on the left (resp. right) side of the line of $s_{i-1}$, then
any descendant $p'\in P_i$ of $p$ is on the left (resp. right) side of $s_{i}$. 
Since each  $p\in P_{i-1}$ is replaced by the same number, $a_i$ of plain points, it follows that
$s_i$ is halving $P_i^{(1)}$.
By a similar argument, $s_i$ is halving $P_i^{(2)}$.
And by the locally halving property of $s_i$, it is halving the set  $P_i^{(3)}$.
So, $s_i$ is halving $P_i$.

Since $s_{i-1}$ is halving $B_{i-1}$, and each bold point has exactly $2a_{i-1}+1$ bold children, 
it follows again from Claim \ref{stripproperty2} that $s_i$ is halving the set $B_i^{(1)}$.
The set $B_i^{(2)}$ contains $2a_{i-1}+1$ bold points. 
It follows from the locally halving property of $s_{i-1}$ and  Claim \ref{stripproperty2}
that $s_i$ is halving the set $B_i^{(2)}$. Therefore, $s_i$ is halving $B_i$ as well.
This concludes the proof 
that the segments in $H^o_i$ are indeed halving segments for every $o\ge i$.


\smallskip

For any $o$, the set $S_0^o$ contains only two points, 
$(1,1)$ and $(0,0)$, and the line determined by them has slope $1$. 
We show that for any $o\ge i$, any two points of $S_i^o$ determine 
a horizonal line, or line of slope close to $1$.
This will be very useful in the proof of part (c) of Lemma 1. 

\begin{claim}\label{szog}
Suppose that 
$\ell$ is a line determined by two points of $S_i$ and let $\gamma$ be the
smaller angle determined by $\ell$ and the $x$-axis. 
Then either $\gamma=0$ or 
$7/8\le\cot\gamma\le 9/8$.
\end{claim}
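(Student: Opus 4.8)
The plan is to prove by induction on $i$ that every line determined by two points of $S_i = S_i^o$ (for any $o \ge i$) is either horizontal or has slope close to $1$; more precisely that its reciprocal slope $\cot\gamma$ lies in $[7/8, 9/8]$. The base case $i=0$ is immediate, since $S_0^o$ has only the two points $(1,1)$ and $(0,0)$, whose connecting line has slope exactly $1$. For the inductive step, I would fix two points $u, v \in S_i$ and trace them back to their parents $u', v' \in S_{i-1}$. There are three cases. If $u'$ and $v'$ coincide, then $u$ and $v$ are both children of the same point (plain or bold), hence lie in a common horizontal arithmetic progression, and the line $uv$ is horizontal — done. If $u' = v'$ were already handled; otherwise $u' \ne v'$, and the line $u'v'$ is (by induction) either horizontal or of reciprocal slope in $[7/8,9/8]$. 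The first of these subcases is easy: if $u'v'$ is horizontal then $u'$ and $v'$ have the same $y$-coordinate, each child inherits its parent's $y$-coordinate exactly, so $uv$ is horizontal too.

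The main content is the remaining subcase: $u', v' \in S_{i-1}$ with $u'v'$ non-horizontal, reciprocal slope in $[7/8,9/8]$, and we must control the reciprocal slope of $uv$. Here the key observation is that passing from a parent to a child only perturbs the $x$-coordinate, and only by a bounded amount: a plain point is shifted right by at most $(a_i-1)\varepsilon_i^o < d_i^o$, a bold child is shifted left by at most $a_i\varepsilon_i^o = d_i^o$, and a newly created bold point $q$ assigned to a segment sits within $d_i^o$ horizontally of the midpoint of that segment (the extra $\varepsilon_i^o/4$ shift is negligible). The $y$-coordinates of $u, v$ are unchanged from $u', v'$ (or are the midpoint-average, again with no vertical change). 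Since $0 < y_1 < y_2 \le 1$ are fixed and the vertical gap $|y_2 - y_1|$ is bounded below by a quantity of order $2^{-i}$ (the smallest vertical separation surviving at level $i-1$ — this can be tracked alongside the induction, or extracted from the explicit coordinates), while the total horizontal perturbation introduced at step $i$ is $O(d_i^o) = O(2^{-(4o+3)i})$, which is vastly smaller, the reciprocal slope changes by at most
$$
\frac{2 d_i^o}{|y_2 - y_1|} = O\!\left(2^{-(4o+3)i} \cdot 2^{i}\right) = O\!\left(2^{-(4o+2)i}\right).
$$
Summing these corrections over all levels $1, 2, \dots, i$ gives a total deviation from slope $1$ that is a geometrically convergent series bounded by a constant times $2^{-(4o+2)}$, which for $o \ge 1$ is comfortably below $1/8$. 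Hence $\cot\gamma \in [7/8, 9/8]$, completing the induction.

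The step I expect to be the main obstacle is pinning down the lower bound on the vertical separation $|y_2 - y_1|$ of two points at level $i$ that do not lie on a common horizontal line, since without it the slope perturbation cannot be controlled. I would handle this by carrying a second inductive invariant: that any two distinct $y$-coordinates occurring in $S_i$ differ by at least (say) $2^{-(i+1)}$ — this holds at $i=0$ (the gap is $1$), and at each step the only new $y$-coordinates are midpoint-averages $\tfrac{y_p+y_b}{2}$ of existing halving segments, which at worst halves the minimum gap, giving the factor $2^{-1}$ per level. Everything else in the argument is routine bookkeeping with the explicitly defined quantities $\varepsilon_i^o = 2^{-(4o+4)i}$ and $d_i^o = 2^{-(4o+3)i}$, and the fact that $d_i^o$ shrinks far faster in $i$ than $2^{-(i+1)}$ does, so the slope corrections form a rapidly convergent series.
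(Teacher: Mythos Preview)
Your induction has a real gap in the case analysis. You assert that if $u' = v'$ (same parent) then $u,v$ lie in a common horizontal arithmetic progression, and that if $u'v'$ is horizontal then $uv$ is horizontal. Both fail when one of $u,v$ is a \emph{newly created bold point} $q$: by the paper's definition, the parent of $q$ is the bold endpoint $b$ of the segment $pb$ to which $q$ is assigned, but $y(q) = (y(p)+y(b))/2 \neq y(b)$. So a bold child does \emph{not} inherit its parent's $y$-coordinate, and your two ``horizontal'' cases simply do not cover this. (Concretely: take $u = b^{(0)}$ and $v = q$ for the same bold $b$; then $u' = v' = b$ but $uv$ is not horizontal.) The conclusion still happens to hold in these cases---$q$ sits near the midpoint of $pb$, and the line from $b^{(0)}$ to that midpoint has essentially the slope of $pb$, which is close to $1$ by induction---but establishing this requires an extra argument you have not given, and the inductive bookkeeping becomes awkward because the ``reference point'' for $q$ (the midpoint of $pb$) is not itself a point of $S_{i-1}$.

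The paper avoids all of this by a different and much shorter route: it applies Claim~\ref{stripproperty1} (already proved) to the single segment $s_0 \in H_0^o$ connecting $(0,0)$ and $(1,1)$. That claim says every descendant segment's extension lies in the strip $S_\alpha(s_0)$ with $\alpha = 4d_1^o = 2^{-4o-1}$, hence every point $a \in S_i^o$ satisfies $|x(a) - y(a)| \le 2^{-4o-1}$. Combined with the easy bound $|y(a)-y(b)| \ge 2^{-i} \ge 2^{-o}$ for points at distinct heights, one line of arithmetic gives $|1 - \cot\gamma| \le 2^{-4o}/2^{-o} = 2^{-3o} \le 1/8$. So rather than tracking slope perturbations pairwise, the paper tracks how far each individual point drifts from the line $y=x$; this is exactly the content of Claim~\ref{stripproperty1}, and once you have it the present claim is immediate.
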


\begin{proof}
For any point $p$, let $x(p)$ and $y(p)$ denote its $x$-coordinate and $y$-coordinate, respectively. 
The statement of Claim \ref{szog} is trivial for $o=0$. Note that in this
case $i=0$ as well, since $o\ge i\ge 0$. 
Let $o>0$ and let $o\ge i\ge 0$. 
Let $a, b\in S_{i}^o$. If $y(a)=y(b)$, then they determine a horizontal line.
Suppose that  $y(a)\neq y(b)$. Then we have $|y(a)-y(b)|\ge 2^{-i}\ge 2^{-o}$.
Let $\gamma$ be the
smaller angle determined by the line $ab$ and the $x$-axis. 
Then $$\cot(\gamma)=(x(a)-x(b))/(y(a)-y(b)).$$
The set $S_0^o$ contains only two points, 
$(1,1)$ and $(0,0)$, and they determine  $s_0\in H_0^o$.
Apply Claim \ref{stripproperty1} for the segment $s_0$,
we obtain that $a, b\in S_{\alpha}(s_0)$ where $\alpha=4d_1^o=2^{-4o-1}$.
Therefore, $|y(a)-x(a)|\le 2^{-4o-1}$ and 
$|y(b)-x(b)|\le 2^{-4o-1}$. Therefore,
$$|1-\cot(\gamma)|\le \frac{|y(a)-x(a)|+|y(b)-x(b)|}{|y(a)-y(b)|}\le \frac{2^{-4o}}{2^{-o}}=2^{-3o}\le 1/8.$$
\end{proof}

Note that $7/8\le\cot\gamma\le 9/8$ implies that 
$4\pi/18<\gamma<5\pi/18$.


It is clear 
from the construction, that for every $o, o'\ge i$, 
$G^o_{i}(S^o_{i}, H^o_{i})$ and 
$G^{o'}_{i}(S^{o'}_{i}, H^{o'}_{i})$
represent the same {\em abstract} graph, but they are different as geometric
graphs.
For any $i$, let 
$$G_{i}(S_{i}, H_{i})=G^i_{i}(S^i_{i}, H^i_{i}).$$
The graph $G_i$ has $n_i$ vertices and $m_i$ edges, all of them are halving edges of $S_i$.

\bigskip

We estimate now the number of points $n_i$ in $S_i$ and the number of halving
lines $m_i$ in $H_i$, that is, we prove parts (a) and (b) of Lemma 1. 
Our calculation is similar to the one by
Nivasch \cite{N08}. 

We have $n_0=2$ and $m_0=1$. 
Since each halving line in $H_{i-1}$ is replaced by $2a_i+1$ halving lines in
$H_i$,
$$m_i=(2a_i+1)m_{i-1},$$
therefore,
$$m_i=m_0\prod_{j=1}^{i}(2a_j+1)=\prod_{j=2}^{i+1}(2^j+1)=
\frac{1}{6}\prod_{j=0}^{i+1}(2^j+1).$$
Consequently
$$m_i>\frac{1}{6}\prod_{j=0}^{i+1}2^j=\frac{1}{6}2^{i^2/2+3i/2+1}= \frac{1}{3}2^{i^2/2+3i/2}.$$
On the other hand,
$$m_i=\frac{1}{6}\prod_{j=0}^{i+1}2^j\cdot\prod_{j=0}^{i+1}\left(\frac{2^j+1}{2^j}\right)=
\frac{1}{3}2^{i^2/2+3i/2}\cdot\prod_{j=0}^{i+1}\left(1+2^{-j}\right)$$
$$<\frac{1}{3}2^{i^2/2+3i/2}\cdot\prod_{j=0}^{i+1}\left(e^{2^{-j}}\right)<\frac{e^2}{3}2^{i^2/2+3i/2}.$$
Here we used that for any $x>0$, $e^x>1+x$.

Summarizing,
\begin{equation}\label{m}
\frac{1}{3}2^{i^2/2+3i/2}<m_i<\frac{e^2}{3}2^{i^2/2+3i/2}.
\end{equation}

The number of bold points in $S_{i-1}$ is $m_{i-2}$, hence the number of plain
points in $S_{i-1}$  
is $n_{i-1}-m_{i-2}$. Therefore, there are 
$$a_i(n_{i-1}-m_{i-2})+(a_i+1)m_{i-2}=a_in_{i-1}+m_{i-2}$$
plain points and $m_{i-1}$ bold points in $S_i$, so
$$n_i=a_in_{i-1}+m_{i-1}+m_{i-2}.$$
Using (\ref{m}) for $m_i$, we prove by induction that
\begin{equation}\label{n}
2^{i^2/2+i/2}<n_i<4(i+1)2^{i^2/2+i/2}.
\end{equation}
Both inequalities hold trivially for $i=0$.
Suppose that 
$i>0$ and 
$2^{(i-1)^2/2+(i-1)/2}<n_{i-1}$.
Then
$$n_i=a_in_{i-1}+m_{i-1}+m_{i-2}>a_in_{i-1}>2^i2^{(i-1)^2/2+(i-1)/2}=2^{i^2/2+i/2}.$$
Suppose now that $n_{i-1}<4i2^{(i-1)^2/2+(i-1)/2}$.
Then
$$n_i=a_in_{i-1}+m_{i-1}+m_{i-2}<a_in_{i-1}+2m_{i-1}$$
$$<4i2^i2^{(i-1)^2/2+(i-1)/2}+\frac{2e^2}{3}2^{(i-1)^2/2+3(i-1)/2}<4(i+1)2^{i^2/2+i/2}.$$

It follows that $m_i\ge n_ie^{\Omega\left({\sqrt{\log n_i}}\right)}$.
This finishes the proof of parts (a) and (b) of Lemma 1.


Now we prove part (c). The statement is trivial for $i=0$, suppose that $i>0$.
Let $a, b\in S_i$. 
First assume that $y(a)\neq y(b)$. Then we have 
$2^{-i}\le |y(a)-y(b)|\le 1$, so
by Claim \ref{szog},
$$7\cdot 2^{-i}/8\le |x(a)-x(b)|\le 9/8.$$
Now assume that $y(a)=y(b)$. 
Then $|x(a)-x(b)|<\sum_{j=1}^{i}d^i_j=\sum_{j=1}^{i}2^{-(4i+3)j}<1$,
on the other hand, $|x(a)-x(b)|\ge \varepsilon^i_i=2^{-4i^2-4i}$.
Summarizing, in any case we have
$$9/8\ge |x(a)-x(b)|\ge 2^{-4i^2-4i}\ge n_i^{-8}$$
by inequality (\ref{n}). Apply a scaling by a factor of $8/9$  
and the statement follows. 
This concludes the proof of Lemma 1. 
\hfill\(\qedsymbol\)

\smallskip

\section{Second construction}

The following statement is stronger than Lemma 1.
It holds for all even numbers $n$, not just a sequence $\{ n_i\}$, and 
in part (c), instead of $\Omega(n_i^{-8})$, 
now we have $\Omega(n^{-1})$, which is optimal.

\bigskip

 



 



\noindent {\bf Lemma 2.} {\em For every even $n>0$ there is a planar point set
$S^*(n)$ of $n$ points and $m=ne^{\Omega\left({\sqrt{\log n}}\right)}$ halving
  lines
such that 
for any two points of $S(n)$, the difference of their 
$x$-coordinates is $\Omega(n^{-1})$ and 
at most $1$.}

\bigskip

\noindent {\bf Proof of Lemma 2.}
Our construction is based on 
$G_{i}(S_{i}, H_{i})$ from Lemma 1. Assume without loss of generality that
$i>10$. This will slightly simplify our calculations.

\smallskip

\begin{claim}\label{mozgatas}
Consider the set $S=S_{i}$ of $n_i$ points from Lemma 1. 
Move each of its points horizontally by 
a distance at most $n_i^{-9}$. 
Let  
$S'$ be the resulting point set. 
If $x, y\in S$ and $xy\in H_i$ then $x'$ and $y'$, the corresponding points in $S'$,
determine a halving line of $S'$. 
\end{claim}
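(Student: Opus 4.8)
The plan is to re-examine the quantitative estimates that underlie the proof of correctness of the construction (Claims~\ref{stripproperty1} and~\ref{stripproperty2}) and observe that they all have a comfortable amount of slack, so a perturbation of size at most $n_i^{-9}$ cannot destroy the halving property of any segment in $H_i$. Recall from the proof that a segment $s\in H_i$ is halving precisely because the line of $s$ separates the descendant points of $S_i$ correctly: for every point $q\in S_i$ that is not an endpoint of $s$, the side of the line of $s$ on which $q$ lies is determined by the ``strip'' analysis, and $q$ sits at \emph{horizontal} distance comparable to $\varepsilon^i_i=2^{-4i^2-4i}\ge n_i^{-8}$ from the relevant line (this is the content of Case~1.1, Case~2.1 and Case~2.2, where the separation $|tq|\ge 2\alpha$ was proved with the bound $2^{i-j-1}\varepsilon^o_i-2d^o_{i+1}$, and the negative term is smaller than the positive one by a factor exponential in $i$). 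Since $i>10$, we have $n_i^{-9}$ much smaller than this horizontal separation, in fact smaller by a factor of at least $n_i$; moving $q$, and moving the two endpoints of $s$, each by at most $n_i^{-9}$, changes the relevant horizontal distances by at most $3n_i^{-9}$, which is far too little to move $q$ across the line of $s$.

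First I would set up the comparison cleanly: fix $s=xy\in H_i$, let $\ell$ be its line and let $\ell'$ be the line of $x'y'$. For any point $q\in S_i$ other than $x,y$, I want to show $q$ and $q'$ lie on the same (open) side of $\ell$ and $\ell'$ respectively. By Claim~\ref{szog} every line determined by two points of $S_i$ is either horizontal or has slope within $1/8$ of~$1$; in particular $\ell$ is not close to vertical, so ``which side of $\ell$'' is controlled by the horizontal distance $d(\ell,q)$ (Definition~3), and this horizontal distance changes by at most a bounded multiple of the perturbation when we pass to $\ell'$ and $q'$ — the slope of $\ell'$ differs from that of $\ell$ by $O(n_i^{-9}/2^{-i})=O(n_i^{-8})$ since the endpoints are at vertical distance $\ge 2^{-i}$, so over the unit-width strip the induced change in horizontal position is still $O(n_i^{-8})\cdot$(small), and the translation of $q$ contributes another $n_i^{-9}$. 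Then I would invoke the established lower bound: $q$ was at horizontal distance at least $\varepsilon^i_i\ge n_i^{-8}$ (more precisely at least $2^{-(4i+3)(i+1)+i+3}$ up to the strip nesting, but certainly $\ge n_i^{-8}$ after scaling as in Lemma~1) from the relevant ancestor line, hence — tracing through the nested strips $S_{2\alpha}(s_i)\subset S_{2\beta}(s_m)$ exactly as in Claim~\ref{stripproperty2} — at horizontal distance $\ge n_i^{-8}$ from $\ell$ itself. Comparing $n_i^{-8}$ with the total perturbation of the horizontal distance, which is $o(n_i^{-8})$, gives that $q'$ and $q$ are on the same side of $\ell'$ and $\ell$.

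Once every non-endpoint point stays on its original side, the counting argument from the end of Section~2 goes through verbatim: the line of $x'y'$ has exactly $(n_i-2)/2$ points of $S'$ on each side because the line of $xy$ had exactly $(n_i-2)/2$ points of $S_i$ on each side and the bijection $q\mapsto q'$ preserves sidedness. Hence $x'y'$ is a halving segment of $S'$. I should also check that $S'$ is still in general position, i.e.\ no three points collinear; but this is automatic since the halving property forces each of the $m_i$ lines $x'y'$ to pass through exactly its two endpoints among $S'$ (any third point on it would make the two open sides unequal), which is all that is needed for the statement, and for the remaining triples one may invoke a generic choice of the perturbation.

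\textbf{Main obstacle.} The only real work is bookkeeping: making sure that after perturbing the endpoints, the \emph{slope} of the segment changes little enough that the horizontal-distance estimates of Claim~\ref{stripproperty2} are preserved with room to spare. The key numerical fact is that vertical coordinate differences are always at least $2^{-i}$ while the perturbation is $n_i^{-9}\ll 2^{-i}\cdot n_i^{-8}$, so the slope changes by only $O(n_i^{-8})$ and the induced horizontal shift over a strip of unit width is still negligible against the guaranteed separation $\ge n_i^{-8}$; here the hypothesis $i>10$ (equivalently, $n_i$ large) is what provides the slack. Everything else is a direct transcription of the already-proved correctness argument with ``$+O(n_i^{-9})$'' inserted into each distance inequality.
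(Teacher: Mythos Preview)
Your plan is the paper's own argument: use Claim~\ref{stripproperty2} to get a lower bound on the horizontal distance from any non-endpoint $q\in S_i$ to the line of $s$, bound the change in this distance when the three points each move by at most $n_i^{-9}$ (the line shifts by at most $\approx 2^{i}n_i^{-9}$ since the endpoints have vertical separation $\ge 2^{-i}$, and $q$ moves by $n_i^{-9}$), and compare. The paper does exactly this in five lines, with the precise separation $2\alpha=2^{i+3}d^i_{i+1}=2^{-4i^2-6i}$ and the precise perturbation $d=n_i^{-9}+2^{i+1}n_i^{-9}$, then checks $d<2\alpha$ directly using $n_i\ge 2^{i^2/2+i/2}$ and $i>10$.

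One caution: your numerics as written do not close. The separation furnished by Claim~\ref{stripproperty2} is $2\alpha=2^{-4i^2-6i}$, and for large $i$ this is \emph{smaller} than $n_i^{-8}$ (from $n_i<4(i+1)2^{i^2/2+i/2}$ one gets $n_i^{-8}>(4(i+1))^{-8}2^{-4i^2-4i}$, and $(4(i+1))^{-8}2^{2i}\to\infty$), so the assertion ``separation $\ge n_i^{-8}$'' is not a consequence of Claim~\ref{stripproperty2}; you appear to be conflating the minimum $x$-coordinate difference of Lemma~1(c) with horizontal distance to a line. Moreover, your perturbation bound after including the slope effect is only $O(2^i n_i^{-9})=O(n_i^{-8})$, the same order as your claimed separation, so even granting that separation the comparison would not be strict. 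The remedy is simply to keep the exact value $2\alpha$ throughout, as the paper does, and verify $n_i^{-9}+2^{i+1}n_i^{-9}<2^{-4i^2-6i}$ by comparing exponents. The general-position discussion at the end is unnecessary for the claim.
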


\begin{proof}
Let $s=pb$, $p, b\in S_i$, $s\in H_i$, and let $q\in S_i$, $q\neq p, b$. 
By Claim \ref{stripproperty2}, $q\not\in S_{2\alpha}(s)$ where $2\alpha
=2^{i+3}d^i_{i+1}=2^{-4i^2-6i}$.
On the other hand $|y(p)-y(b)|\ge 2^{-i}$. 
Move $p$, $b$, and $q$ horizontally by 
at most $n_i^{-9}$. The horizontal distance of the line of $s$ and the point
$q$ changed by at most $d=n_i^{-9}+2^{i+1}n_i^{-9}$.
By inequality (\ref{n}), $n_i\ge 2^{i^2/2+i/2}$, therefore,
$$d=n_i^{-9}+2^{i+1}n_i^{-9}\le
1/2^{9i^2/2+9i/2}+2^{i+1}/2^{9i^2/2+9i/2}$$
$$=1/2^{9i^2/2+9i/2}+2^{i+1}/2^{9i^2/2+7i/2-1}<
2^{-4i^2-6i}=2\alpha.$$ 
We used here that $i>10$. Therefore, $q$ remains on the same side of $s$ after
the perturbation of the points. This holds for any  $q\neq p, b$, therefore, 
$s$ remains a halving line. 
\end{proof}

Suppose now that $i>10$ is a fixed number. 
Let $n=n_i$, $m=m_i$, $S=S_i$, and $H=H_i$.  
Let $p_1, \ldots, p_n$ be the points of $S$, $p_j=(x_j, y_j)$.
Then let $p'_j=(x'_j, y_j)$, where
$$x'_j=\frac{\lfloor n^9x_j\rfloor}{n^9} + \frac{j}{n^{10}}.$$
Let $S'=\{p'_j\ |\ 1\le j\le n\}$ and let $H'$ be the segments corresponding
to the segments in $H$. Each point of $S$ is moved horizontally by less than
$n^{-9}$,
so, by Claim \ref{mozgatas}, the segments in $H'$ are halving segments of $S'$.
Now apply a horizontal translation so that all points of $S'$ have $x$-coordinates 
between $1$ and $3$. This is possible by Lemma 1 (c). 


Let $\delta>0$ be a very small number.
Apply the transformation 
$(x, y)\longrightarrow (x, \delta^2\cdot y)$ on $S'$, call the 
resulting point set $S''$. 
See Figure~\ref{fig:second1}. Clearly, halving edges remained halving edges.
Since $\delta$ is very small, all halving edges are very close to
the $x$-axis and almost parallel to it. All points of $S''$ have very small
$y$-coordinates and 
their $x$-coordinates are between $1$ and $3$. 

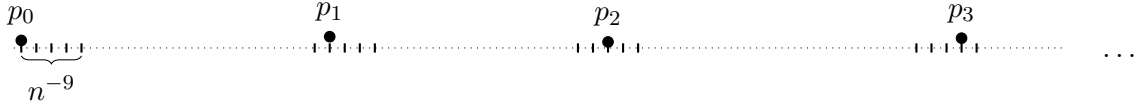
\begin{figure}
\begin{center}
\begin{tikzpicture}[] 

	 \tikzstyle{t}=[draw,circle,fill=black,minimum size=4,inner sep=0]

	\newcommand{\deltanull}{0.2}
	 \draw [dotted](0,0) -- (14,0);

	 \node[t] (p0) at (0.1,0.1) [label=above:$p_0$] {};
	  \node[t] (p1) at (4 + \deltanull,0.15) [label=above:$p_1$] {};
	  \node[t] (p2) at (7.5 + 2*\deltanull,0.08) [label=above:$p_2$] {};
	  \node[t] (p3) at (12 + 3*\deltanull,0.13) [label=above:$p_3$] {};

	\newcounter{ct}
	\forloop{ct}{0}{\value{ct} < 5}%
	{%
	\draw[thick] ([yshift=1.75]$(0.1+\deltanull*\arabic{ct}, 0)$) -- ([yshift=-1.75]$(0.1+\deltanull*\arabic{ct}, 0)$);
	}

	\forloop{ct}{0}{\value{ct} < 5}%
	{%
	\draw[thick] ([yshift=1.75]$(4+\deltanull*\arabic{ct}, 0)$) -- ([yshift=-1.75]$(4+\deltanull*\arabic{ct}, 0)$);
	}

	\forloop{ct}{0}{\value{ct} < 5}%
	{%
	\draw[thick] ([yshift=1.75]$(7.5+\deltanull*\arabic{ct}, 0)$) -- ([yshift=-1.75]$(7.5+\deltanull*\arabic{ct}, 0)$);
	}

	\forloop{ct}{0}{\value{ct} < 5}%
	{%
	\draw[thick] ([yshift=1.75]$(12+\deltanull*\arabic{ct}, 0)$) -- ([yshift=-1.75]$(12+\deltanull*\arabic{ct}, 0)$);
	}
	 
	  \draw [decorate,decoration={brace,mirror, amplitude=3pt}] ([yshift=-3]$(0.1,0)$) -- ([yshift=-3]$(0.9,0)$) node [pos=0.5, yshift=-12pt] {$n^{-9}$};
	 \node at (14.7,-0.1) {$\ldots$};
\end{tikzpicture}
\end{center}\caption{The points of $S''$.}\label{fig:second1}
\end{figure}

Let $\Psi_{\alpha}$ denote the counterclockwise 
rotation about the origin by angle $\alpha$
and let $T_r$ denote the translation to the 
right by $r$.
We have to introduce one more parameter, $N$. Let $n^9\le N\le n^{10}$ be an
arbitrary number.

Define the point sets $S_+^{(k)}$ and
$S_-^{(k)}$, called {\em positive blocks} and {\em
  negative blocks}, respectively, as follows.
For $0\le k\le N$, let
$$S_+^{(k)}= \Psi_{2k\delta}\left(T_{k/n^9}(S'')\right),$$
and for  $0\le k\le N-1$, let
$$S_-^{(k)}= \Psi_{\pi+(2k+1)\delta}\left(T_{k/n^9}(S'')\right).$$
Let $S^{\mbox{\scriptsize union}}$ be their union, that is,
$$S^{\mbox{\scriptsize union}}=\bigcup_{k=0}^{N}S_+^{(k)} \cup \bigcup_{k=0}^{N-1}S_-^{(k)}.$$
See Figure~\ref{fig:second2}.

Finally, apply a scaling of factor $n^9/(6N)$ to $S^{\mbox{\scriptsize union}}$ and let 
$S^{*}$ be a point set obtained.

\medskip

We claim that $S^{*}$ satisfies the conditions. 
Recall that each block has $m$ halving lines, $n=n_i$ and $m=m_i$ 
for some fixed $i$.
The set $S^{*}$ contains $2N+1$ blocks, each contains $n$ points. 
So $|S^{*}|=n^{*}=2Nn+n$.

Observe, that a halving line of a block
has the same number of other blocks on both sides, so it is a halving line of
$S^{*}$ as well.
So, for the number of halving edges of $S^{*}$, 
$$m^{*}=(2N+1)m=(2N+1)n
e^{\Omega\left({\sqrt{\log
      n}}\right)}=n^{*}e^{\Omega\left({\sqrt{\log n^{*}}}\right)}$$
since $N\le n^{10}$.
This proves parts (a) and (b). 
\smallskip

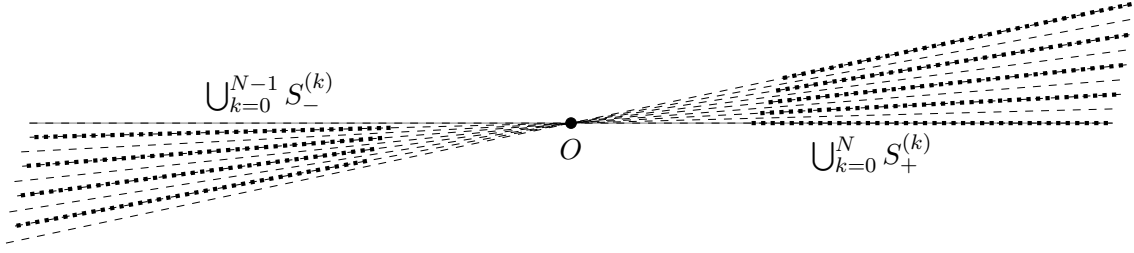
\begin{figure}
 \begin{center}
 \begin{tikzpicture}[] 
	 \begin{scope}[scale=0.8]
 	 \tikzstyle{t}=[draw,circle,fill=black,minimum size=4,inner sep=0]
 	 \draw [gray](-9,0) -- (9,0);
 	 \node[t] (p0) at (0,0) [label=below:$O$] {};
 \newcommand{\deltanullb}{0.15}
 	\forloop{ct}{0}{\value{ct} < 5}%
 	{%
	\begin{scope}[rotate=3*\arabic{ct}]
 	\begin{scope}[shift={(\deltanullb*\arabic{ct},0)}]
 	 \draw [ultra thick, dotted](3,0) -- (9,0);
	 \draw [ultra thin, dashed](-9 - 2* \deltanullb*\arabic{ct},0) -- (9,0);
 	 \end{scope}
	 \end{scope}
 	}

 	\forloop{ct}{0}{\value{ct} < 4}%
 	{%
	\begin{scope}[rotate=180+3*\arabic{ct}+1.5]
 	\begin{scope}[shift={(\deltanullb*\arabic{ct},0)}]
 	 \draw [ultra thick, dotted](3,0) -- (9,0);
	 \draw [ultra thin, dashed](-9 - 2* \deltanullb*\arabic{ct},0) -- (9 ,0);
 	 \end{scope}
	 \end{scope}
 	}
	
		 \node at (5,-0.5) {$\bigcup_{k=0}^{N}S_+^{(k)}$};
		  \node at (-5,0.5) {$\bigcup_{k=0}^{N-1}S_-^{(k)}$};
		  \end{scope}
 \end{tikzpicture}
 \end{center}\caption{The construction of $S^*$.}\label{fig:second2}
\end{figure}

By the construction, the 
$x$-coordinates of 
the points of $S''$ 
are between $1$ and $3$.
Consequently, 
the $x$-coordinates of 
the points of a positive block $S_+^{(k)}$ (resp. negative block $S_-^{(k)}$)
are between $0$ and  $3N/n^9$ (resp. $-3N/n^9$ and $0$).
Therefore, 
the 
$x$-coordinates of 
the points of 
$S^{\mbox{\scriptsize union}}$ 
are between $3N/n^9$ and  $-3N/n^9$.
Finally, we can conclude that 
the 
$x$-coordinates of 
the points of 
$S^{*}$ 
are between $1/2$ and  $-1/2$.

Recall that $p'_1, \ldots, p'_n$ are the points of $S'$,  $p'_j=(x'_j, y_j)$,
and for every $j$, $x'_j=\frac{k}{n^9} + \frac{j}{n^{10}}$ for some 
integer $k$.
This remains true for $S^{''}$ and still remains true
if we apply a translation by $\frac{l}{n^9}$ for some
integer $l$. If we apply now a rotation about the origin by a very small
angle, 
the $x$-coordinates will change by a very small amount.
So, we have the following statement: let  $q_1, \ldots, q_n$ be the points of a
positive block
$S_+^{(k)}$, corresponding to the points $p'_1, \ldots, p'_n$ 
of $S'$. Then the $x$-coordinate $x_j$ of $q_j$ is very close to a number of
the form $\frac{k}{n^9} + \frac{l}{n^{10}}$ for some $k, l$ integers.
Analogous statement holds for the negative blocks. 
So, for any two points of 
$S^{\mbox{\scriptsize union}}$, the difference of their 
$x$-coordinates is least 
$n^{-10}/2$. 
After scaling by a factor of $n^9/6N$ we get the final set $S^{*}$, so in
$S^{*}$, 
the difference of the 
$x$-coordinates of any two points is at least 
$\frac{n}{12N}=\Omega((n^{*})^{-1})$. 
This proves Lemma 2 if $n^{*}$, the number of points, is of the form $(2N+1)n$, 
where
$n=n_i$ for some $i$ and $n^9\le N\le n^{10}$. For other values we have to add
some extra points as 
follows.

\smallskip

Observe, that for every $i>10$, $(2n_i^{10}+1)n_i>(2n_{i+1}^9+1)n_{i+1}$. 
This means, 
that for any large enough even $n$ (say, $n>(2n_{10}^9+1)n_{10}$),
there is an $i>10$ and an $N$,  $n_i^9\le N\le n_i^{10}$, such that
$(2N+1)n_i\le n\le (2N+3)n_i$. So we can take our construction with parameters
$n_i$ and $N$, and add at most $2n_i$ extra points so that the conditions are
still satisfied.
This concludes the proof of Lemma~2. \hfill\(\qedsymbol\)


\section{Third construction}

%
%
%

\noindent {\bf Proof of Theorem 1.}
Let $S^{*}$ be the point set satisfying the conditions of Lemma 2. 
Let $n$ be the number of its points and let $m$ be the number of its halving lines.
Let $\varepsilon>0$ be a very small number.

Apply the transformation 
$(x, y)\longrightarrow (x+1/2, \varepsilon^2\cdot y)$ on $S^{*}$ from Lemma 2, 
and let $R$ denote
the resulting point set. 
The set $R$ contains $n$ points and 
$m=ne^{\Omega\left({\sqrt{\log n}}\right)}$ halving lines. 
The $x$-coordinates of its points are between 1 and 2, the $y$-coordinates are
extremely small, and the distance between any two points is $\Omega(1/n)$.

For $0\le k\le n$, let
$$R_k= \Psi_{2k\pi/(n+1)}(R)$$
and let 
$$P=\bigcup_{k=0}^{n}R_k.$$
See Figure~\ref{fig:third}.

The set $P$ contains $N=n(n+1)$ points. For every $k$,  each halving
line of $R_k$ contains the same number of other blocks on both sides, so it is
a halving line of $P$. Therefore, the number of halving lines of $P$,
$M=(n+1)ne^{\Omega\left({\sqrt{\log n}}\right)}=Ne^{\Omega\left({\sqrt{\log N}}\right)}$.
The minimum distance among the points in $P$ is
$\Omega(1/n)=\Omega(1/\sqrt{N})$ and the diameter of $P$ is less than
$4$. 
This finishes the proof of Theorem~1, if the number of points, $N$, is of the form $n(n+1)$ where $n$ is even.
If we want a construction with $N$ even number of points where 
$n(n+1)<N<(n+2)(n+3)$ for some $n$ even, 
we take the 
construction with 
$n(n+1)$ points and add $N-n(n+1)<4n+6=O(N^{1/2})$ extra points so that the conditions are still satisfied. 
One possible way to do it is that we add the vertices of a regular 
$(N-n(n+1))$-gon, inscribed in a circle of radius $3$ about the origin, so that none of the extra points are 
on the (original) halving lines. 
This concludes the proof. \hfill\(\qedsymbol\)

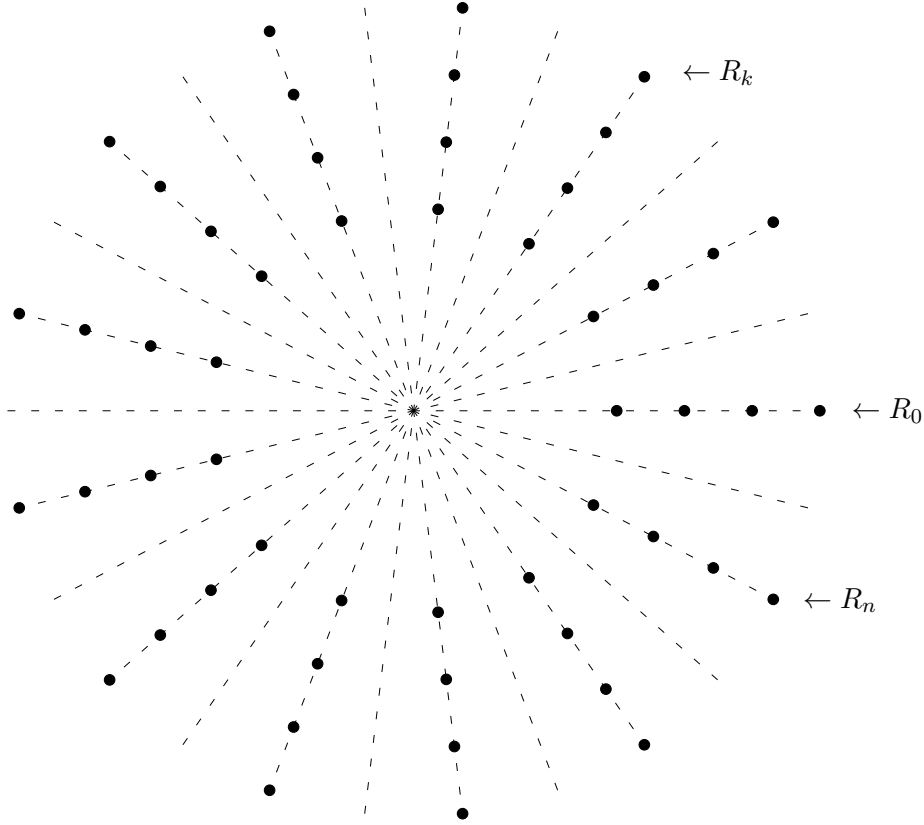
\begin{figure}
	\begin{center}
	\begin{tikzpicture}[scale=0.9] 
		 \tikzstyle{t}=[draw,circle,fill=black,minimum size=4,inner sep=0]
	 
	\newcommand{\deltanullb}{0.15}
		\forloop{ct}{0}{\value{ct} < 13}%
		{%
		\begin{scope}[shift={(0,0)},rotate=(360/13)*\arabic{ct}]
			\node[t] (p0) at (3,0)  {};
			\node[t] (p0) at (4,0)  {};
			\node[t] (p0) at (5,0)  {};
			\node[t] (p0) at (6,0)  {};
		 \draw [ultra thin, loosely dashed](-6,0) -- (6,0);
		 \end{scope}
		}

\node at (7,0) {$\leftarrow R_0$};

\node at (4.5,5) {$\leftarrow R_k$};
\node at (6.3,-2.8) {$\leftarrow R_n$};
	\end{tikzpicture}
	\end{center}\caption{The point set $P$.}\label{fig:third}
\end{figure}

\medskip

\noindent {\bf Remark.}  The number of halving lines of $P$ is 
$\Omega\left(ne^{{\sqrt{(\log 2)/11 }\sqrt{\log n}}}/\sqrt{\log n}\right)$,
 while in the construction of Nivasch 
\cite{N08} it is $\Omega\left(ne^{\sqrt{2 \log 2}\sqrt{\log n}}/\sqrt{\log n}\right)$.
\medskip

\noindent {\bf Remark.} The  point set $P$ is 4-dense, it can be proved by
the same calculation as in \cite{EVW97}.  
We omit the details.


\section{Construction in the space}

\noindent {\bf Proof of Theorem 2.}
Suppose first that $d=3$.
Let $m>0$ even.  
By Lemma 2, there is a planar set $S$ with the 
following properties. 

\begin{enumerate}
 
\item[(a)] The number of points $|S|=m$,

\item[(b)] the number of halving edges of  $S$ is  
$me^{\Omega\left({\sqrt{\log m}}\right)}$,

\item[(c)] for any two points of $S$, the difference of their 
$x$-coordinates is at most $1$ and at least 
$\Omega(m^{-1})$.

\end{enumerate}

Let $\varepsilon>0$ be a very small number. We set its value later. 
By an application of a suitable 
affine transformation (flattening) we can 
assume in addition that

\begin{enumerate}

\item[(d)] each point of $S$ has $y$-coordinate $|y|< \varepsilon^3$.

\end{enumerate}

First, we define two planar sets of points, {\em block $A$} and {\em block $B$}.
Block $A$ contains two parts, the {\em important part} and the 
{\em unimportant part}. Both parts contain $m$ points.
The important part is a translated copy of $S$ 
such that all points have $x$-coordinate 
$1\le x\le 2$ and $y$-coordinate $|y|<\varepsilon^3$.
Then all halving lines of $S$ are very close to the $x$-axis.
The unimportant part contains the points $p_1, \ldots, p_{m/2}$ 
and $q_1, \ldots, q_{m/2}$, where
$p_i=(-2+i/m, \varepsilon^2)$, $q_i=(-1.5+i/m, -\varepsilon^2)$. 
If $\varepsilon$ is small enough, then all halving 
lines of the important part will have 
$p_1, \ldots, p_{m/2}$ on one side and 
$q_1, \ldots, q_{m/2}$ on the other. 
Therefore, they are also halving lines of the whole block $A$.
The origin $O=(0,0)$ is called the {\em center},  
the $x$-axis is called the {\em axis} and
the $xy$ plane is called the {\em plane} of block $A$.

Block $B$ contains points $r_1, \ldots, r_m$ and $s_1, \ldots, s_m$ where
$r_i=(-2+i/m, \varepsilon)$, $s_i=(1+i/m, -\varepsilon)$. 
Again  $O=(0,0)$ is called the {\em center}, the $x$-axis is called the {\em axis} and
the $xy$ plane is called the {\em plane}  of block $B$. Note that $B$ is
symmetric about the origin.

Now take a maximal symmetric (about the origin) packing of discs, 
of spherical radii $1/m$, on the unit sphere, whose center is the origin.
Let $c_1, \ldots, c_k$, $c'_1, \ldots, c'_k$ be their centers, 
$c'_i$ is the reflection of $c_i$. 
Since the packing was maximal, the discs of spherical radii $2/m$ around  
$c_1, \ldots, c_k$, $c'_1, \ldots, c'_k$ cover
the sphere. Therefore, $k=\Theta(m^2)$. 
On the other hand, any two centers are at 
distance at least $1/m$ (actually, almost $2/m$). 
Suppose for simplicity that $k$ is even. 
Perturb the points $c_i$ so that 
no three of them determine a plane through the origin 
and none of them is on the $z$-axis.
Let $\ell_1, \ldots, \ell_{k/2}$,  
be the lines through the origin and 
$c_1, \ldots, c_{k/2}$, respectively, and  
let $\ell'_1, \ldots, \ell'_{k/2}$ 
be the lines through the origin and 
$c_{k/2+1}, \ldots, c_k$, respectively.  
For each line $\ell_i$, $1\le i\le k/2$, 
take a block $A$ so that its center is the origin and its axis is $\ell_i$. 
Its plane can be arbitrary through $\ell_i$ that does not go through any other
$c_j$, $j\neq i$.
For each line $\ell'_i$,  $1\le i\le k/2$, 
take a block $B$ so that its center is the origin and its axis is $\ell'_i$.
Again, its plane can be arbitrary through $\ell'_i$ that does not go through
any other $c_j$.  Choose now the parameter $\varepsilon>0$ to be a very small number. 
Finally slightly perturb the points so that they are in general position.
We obtain the point set $P$. 
Now we have  $n=\Theta(m^3)$ points. The maximum distance is at most $4$, 
the minimum is at least $1/m=\Theta({n^{-1/3}})$, 
so $P$ is $\gamma$-dense for some $\gamma$.

Let {\bf A} be a block of type $A$, 
and {\bf B} a block of type $B$. 
Take two points, $u$, $v$, of the important part of block {\bf A}
that determine a halving line and a point $w$ of block {\bf B}.
The plane $\Pi$ determined by these three points is almost a halving plane of $P$.
It halves any other block, since it goes almost through the origin, 
it also halves block {\bf A}, by the choice of $u$ and $v$, 
and since {\bf B} is almost symmetric about the origin, 
the plane has one more points of {\bf B} on one side than on the other. See Figure~\ref{fig:blockB}.
So, $\Pi$ has one more or one less points of $P$ above than below it. 
An easy calculation shows that we have 
$k^2m^2e^{\Omega\left({\sqrt{\log n}}\right)}=n^2e^{\Omega\left({\sqrt{\log
      n}}\right)}$ 
such planes. We can assume without loss
of generality that at least half of them have one {\em less} points of $P$ above than below. 
Add the point $(0, 0, 2)$ to $P$. It is still dense, and now it has  
$n^2e^{\Omega\left({\sqrt{\log n}}\right)}$ halving planes. 

This proves the result in three dimensions, 
if the number of points  $n=\Theta(m^3)$ for some even $m$
and $n$ is odd. In order to extend the result to all odd 
$n$, we have to add some extra points to the
construction, in pairs, symmetric about the origin. 
We can do it so that it 
does not decrease the number of halving planes
and the resulting set is still dense.

\smallskip

Suppose now that $d>3$. The construction is analogous, we only sketch it.
We use the same blocks, $A$ and $B$ as before.
We take a maximal symmetric (about the origin) packing of $d-1$-dimensional discs, 
of spherical radii $1/m$, on the unit sphere, whose center is the origin.

Let $c_1, \ldots, c_k$, $c'_1, \ldots, c'_k$ be their centers, 
$c'_i$ is the reflection of $c_i$. 
Since the packing was maximal, 
%
%
$k=\Theta(m^{d-1})$. Suppose that $k$ is even. 
%
%
%
Let $\ell_1, \ldots, \ell_{k/2}$, $\ell'_1, \ldots, \ell'_{k/2}$ 
be the lines through the origin and 
$c_1, \ldots, c_k$, respectively.  
For each line $\ell_i$, $1\le i\le k/2$, 
take a block $A$ so that its center is the origin and its axis is $\ell_i$. 
For each line $\ell'_i$,  $1\le i\le k/2$, 
take a block $B$ so that its center is the origin and its axis is $\ell'_i$.
Finally slightly perturb the points so that they are in general position.
We obtain the point set $P$. 
Now we have  $n=\Theta(m^d)$ points. The maximum distance is at most $4$, 
the minimum is at least $1/m=\Theta({n^{-1/d}})$, 
so $P$ is $\gamma$-dense for some $\gamma$.

Let {\bf A} be a block of type $A$, 
and let ${\mbox{\bf B}}_1,  {\mbox{\bf B}}_2, \ldots, {\mbox{\bf B}}_{d-2}$
be different blocks, each  
of type $B$. 
Take two points, $u$, $v$, of the important part of block {\bf A}
that determine a halving line and for $1\le i\le d-2$, 
let
$w_i\in {\mbox{\bf B}}_{i}$. 
The hyperplane $\Pi$ determined by these $d$ points is almost a halving plane of $P$.
It halves any other block, since it goes almost through the origin, 
it also halves block {\bf A}, by the choice of $u$ and $v$, 
and since ${\mbox{\bf B}}_{i}$
is almost symmetric about the origin, 
$\Pi$
has one more points of ${\mbox{\bf B}}_{i}$
on one side than on the other. 
An easy calculation shows that we have 
$n^{d-1}e^{\Omega\left({\sqrt{\log n}}\right)}$ such planes.

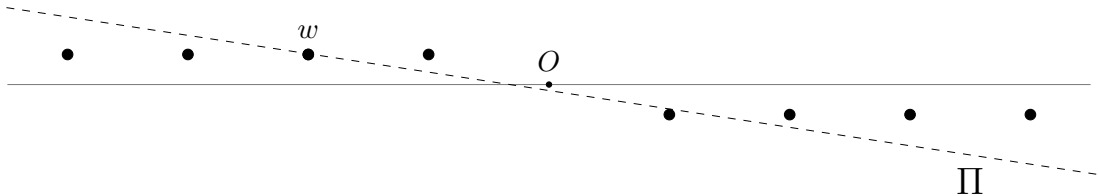
\begin{figure}
 \begin{center}
 \begin{tikzpicture}[] 
	 \begin{scope}[scale=0.8]
 	 \tikzstyle{t}=[draw,circle,fill=black,minimum size=4,inner sep=0]
 	 \draw [gray](-9,0) -- (9,0);
 	 \node[draw,circle,fill=black,minimum size=2,inner sep=0] (p0) at (0,0) [label=above:$O$] {};
 \newcommand{\deltanullb}{0.2}

 	\forloop{ct}{0}{\value{ct} < 4}%
 	{%
 	\begin{scope}[shift={(2*\arabic{ct},0.5)}]
 	\node[t] (p0) at (-8,0) [] {};
 	 \end{scope}
 	}
	
 	\forloop{ct}{0}{\value{ct} < 4}%
 	{%
 	\begin{scope}[shift={(2*\arabic{ct},-0.5)}]
 	\node[t] (p0) at (2,0) [] {};
 	 \end{scope}
 	}
	
 	\begin{scope}[shift={(-6,0.5)}]
 	\node[t] (p0) at (2,0) [label=above:$w$] {};
 	 \end{scope}
	 
	 \node at (7,-1.6) {\Large $\Pi$};

\begin{scope}[shift={(0,-0.1)}]
	\begin{scope}[rotate=171.3]
 	\begin{scope}[shift={(\deltanullb,0)}]
	 \draw [ultra thin, dashed](-9 - 2* \deltanullb,0) -- (9 ,0);
 	 \end{scope}
	 \end{scope}
\end{scope}
	
		  \end{scope}
 \end{tikzpicture}
 \end{center}\caption{Block {\bf B} of type $B$ and plane $\Pi$.}\label{fig:blockB}
\end{figure}

Let $\mbox{\em diff}(\Pi)$ be the number of points of $P$ {\em above} $\Pi$ minus
the number of points {\em below} $\Pi$. (``Above'' and ``below'' are defined
with respect to the $d$-th coordinate.) 
It follows, that  
$|\mbox{\em diff}(\Pi)|\le d-1$ and $\mbox{\em diff}(\Pi)-d$ is even. 
So, for some 
$x$, where $|x|\le d-1$ and $x-d$ is even, 
there are still 
$n^{d-1}e^{\Omega\left({\sqrt{\log n}}\right)}$ such planes $\Pi$ with $\mbox{\em diff}(\Pi)=x$.
We can assume without loss
of generality that $x\le 0$. 
Add 
the points $(0, 0, \ldots, 0, 2+i/n)$, $1\le i\le x$ to $P$. 
The resulting set is still dense and now it has  
$n^2e^{\Omega\left({\sqrt{\log n}}\right)}$ halving planes.

\bigskip

\noindent {\bf Acknowledgement.} We are very grateful to the anonymous 
referees for 
their extremely thorough work. Their remarks enormously 
improved the presentation of the paper.

\end{document}